\newtheorem{theorem}{Theorem}[subsection]
\newtheorem{lemma}[theorem]{Lemma}
\newtheorem{proposition}[theorem]{Proposition}
\theoremstyle{definition}
\newtheorem{definition}[theorem]{Definition}
\newtheorem*{acknowledgments}{Acknowledgments}
\theoremstyle{remark}
\newtheorem{remark}[theorem]{Remark}
\newtheorem{Example}[theorem]{Example}
\DeclareMathOperator{\Spec}{\underline{Spec}}
\DeclareMathOperator{\Pic}{Pic}
\newcommand{\numset}[1]{\mathbb{#1}}
\newcommand{\Z}{\numset{Z}}
\newcommand{\Q}{\numset{Q}}
\newcommand{\C}{\mathbb{C}}
\newcommand{\F}{\mathbb{F}}
\newcommand{\X}{\mathcal{X}}
\newcommand{\PP}{\mathbb{P}}
\newcommand{\OO}{\mathcal{O}}
\newcommand{\D}{\mathcal{D}}
\newcommand{\ZZ}{\mathcal{Z}}
\newcommand{\mf}{\mathfrak{U}}
\newcommand{\mfp}{\mathfrak{P}}
\newcommand{\surj}{\twoheadrightarrow}
\title{Genus six curves, K3 surfaces, and stable pairs}
\author[J.~ Ross~Goluboff]{J. Ross Goluboff}
\address{Department of Mathematics, Boston College, Chestnut Hill, MA 02467, USA}
\email{\href{mailto: goluboff@bc.edu}{goluboff@bc.edu}}
\begin{document}

\begin{abstract}
A general smooth curve of genus six lies on a quintic del Pezzo surface. In \cite{AK11}, Artebani and Kond\=o construct a birational period map for genus six curves by taking ramified double covers of del Pezzo surfaces. The map is not defined for special genus six curves. In this paper, we construct a smooth Deligne-Mumford stack $\mfp_0$ parametrizing certain stable surface-curve pairs which essentially resolves this map. Moreover, we give an explicit description of pairs in $\mfp_0$ containing special curves. 
\end{abstract}

\maketitle

\section{Introduction}
\numberwithin{theorem}{section}
\counterwithin{theorem}{section}
\numberwithin{equation}{section}
\counterwithin{equation}{section}

In \cite{AK11}, the authors construct a birational period map 
\begin{equation*}
\varphi: \mathcal{M}_6 \dashrightarrow D/\Gamma,
\end{equation*}
where the source denotes the moduli space of genus six curves and the target parametrizes certain lattice-polarized $K3$ surfaces (see, for example, \cite[Section 1]{Dol96}). Their construction of $\varphi$ is as follows. The canonical model of a general smooth curve $C$ of genus six is a quadric section of a unique smooth quintic del Pezzo surface $\Sigma_5$ embedded anti-canonically in $\PP^5$. The double cover of $\Sigma_5$ branched along $C$ will be a $K3$ surface. Taking the period point of this surface defines $\varphi$. More precisely, the output of $\varphi$ is a lattice-polarized $K3$ surface where the lattice has rank $5$ (note that $\mathcal{M}_6$ and $D/\Gamma$ are $15$-dimensional, while the moduli space of polarized $K3$ surfaces is $19$ dimensional).

A smooth curve of genus six is called \emph{special} if it is one of the following four types: hyperelliptic, trigonal, bielliptic, or plane quintic. The canonical model of any non-special smooth curve of genus six lies on a unique weak del Pezzo surface (see, for example, {\cite[Proposition 1.1]{AK11}}), so $\varphi$ extends over such curves. Note that $\varphi$ does not extend over special curves; the canonical models of these curves do not lie on weak quintic del Pezzo surfaces in $\PP^5$. Moreover, Artebani and Kond\=o prove that the birational period map $\varphi$ induces an isomorphism
\begin{equation*}
\mathcal{M}_6 \setminus \{\text{special curves}\} \rightarrow (D \setminus \mathcal{H})/\Gamma,
\end{equation*}
where $\mathcal{H}$ denotes a \emph{discriminant divisor}. Artebani and Kond\=o show that $\mathcal{H}$ has $3$ irreducible components and that the general member of these components corresponds to a genus six curve with a node in $\Sigma_5$, the union of a plane quintic and a line in $\PP^2$, and the union of a trigonal curve $C$ of genus six and a section $e \in |K_C-2g^1_3|$ in $\PP^1 \times \PP^1$ respectively (\cite[Theorem 0.2]{AK11}). The $K3$ surfaces corresponding to such curves are also constructed via double covers branched along these curves.

The goal of this paper is to construct a space resolving the indeterminacy of $\varphi$ and give a modular interpretation for this space. Studying birational period maps has been a topic of significant interest in the literature. Shah in \cite{Sh80} defines a period map for the GIT (geometric invariant theory) space of plane sextics by taking ramified double covers of $\PP^2$. The indeterminacy occurs precisely along the locus of triple conic curves, which he resolves by blowing it up. Kond\=o in \cite{Kon00} defines a birational period map for curves of genus three by taking four-fold cyclic covers of $\PP^2$ branched along quartic curves, which induces an isomorphism between the moduli space of non-hyperelliptic curves of genus three and the arithmetic quotient of a period domain minus a discriminant divisor. Similarly, Kond\=o in \cite{Kon02} constructs a birational period map for genus four curves by taking triple covers of quadric surfaces in $\PP^3$ branched along non-hyperelliptic curves. Artebani in \cite{Ar09} expands upon Kond\=o's work in genus three by considering the GIT space for plane quartics and completely resolves the indeterminacy of the period map on the level of compactifications by blowing up the double conic locus. In \cite{CMJL12}, the authors expand upon Kond\=o's work in genus four by constructing a GIT model for $\overline{\mathcal{M}}_4$ and resolving the period map using techniques from Looijenga. In \cite{LO16}, Laza and O'Grady study the relationship between GIT and Satake-Baily-Borel compactifications of quartic $K3$ surfaces.

To resolve the period map $\varphi$ for genus six curves, rather than using GIT, we appeal to Hacking's theory of stable pairs developed in \cite{Hac01}, \cite{Hac04} and generalized in \cite{DH18}. A stable pair is a surface-curve pair satisfying certain properties for moduli theoretic purposes. The moduli spaces of stable pairs constructed in these papers are modified versions of the KSBA (Koll\'ar, Shepherd-Barron, Alexeev) compactification. In Section 3, we will formally define stable pairs and their allowable ($\mathbb{Q}$-Gorenstein) families. We remark that work of Hyeon and Lee in \cite{HL10} reveals that Hacking's compactification of plane quartic curves is already useful for the analogous period map question in genus three: in this case, Hacking's space extends the period map over hyperelliptic curves. We should also note that in \cite{AET19}, the authors construct a stable pair compactification of $K3$ surfaces of degree $2$.

Using Hacking's framework, we can consider a moduli stack $\mathfrak{P}$ of stable pairs whose general point is a pair of the form $(\Sigma_5, C)$, where $C$ is smooth and of class $-2K_{\Sigma_5}$ (see Definition~\ref{aux_stacks}). We define two open substacks of $\mfp$ that will be necessary for us:
\begin{definition} \label{stacks}
Let $\mfp_0 \subset \mfp$ be the open substack parameterizing stable pairs $(X, D)$ such that: 

\begin{enumerate}
\item The surface $X$ has only combinations of du Val, index two cyclic quotient singularities, and simple elliptic singularities.

\item The curve $D$ has at worst ADE singularites and avoids the singularities of $X$. 
\end{enumerate}
In $(1)$, we allow ``empty" combinations of singularities, hence the surface $X$ may have only some of the listed singularities or may even be smooth. 

Let $\mfp_0^\text{sm} \subset \mfp_0$ be the open substack parametrizing stable pairs $(X, D)$ satisfying properties $(1)$ and $(2)$ with $D$ smooth.
\end{definition}

We remark that an index $2$ cyclic quotient singularity arising on a stable pair in $\mathfrak{P}$ is a \emph{class $T$ singularity} (see Definition~\ref{classT}).

The main result of this paper is the following:

\begin{theorem} \label{mainthm}

The stack $\mfp_0$ is Deligne-Mumford, smooth, and fits into the diagram 

\begin{center}
\begin{tikzcd}[column sep=small]
& \mathfrak{P}_0 \arrow["j", dl, dashed, swap]  \arrow[dr, "\tilde \varphi"] & \\
\overline {\mathcal{M}}_6 \arrow[dashed]{rr}{\varphi} & & (D/\Gamma)^\ast
\end{tikzcd}
\end{center}
where $j$ is the natural (birational) forgetting map and $\tilde \varphi$ extends the double cover construction of $\varphi$. Moreover, the map $j$ restricts to a surjective morphism 
\begin{equation*}
j|_{\mfp_0^\text{sm}}: \mfp_0^\text{sm} \surj \mathcal{M}_6 \setminus \mathcal{H}_6,
\end{equation*}
where $\mathcal{H}_6$ denotes the hyperelliptic locus.

\end{theorem}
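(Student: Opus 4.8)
The plan is to treat the three assertions separately—that $\mfp_0$ is Deligne–Mumford, that it is smooth, and that $j|_{\mfp_0^{\text{sm}}}$ surjects onto $\M_6 \setminus \mathcal{H}_6$—by rather different tools. The Deligne–Mumford property I expect to be essentially formal: $\mfp$ is the moduli stack of a class of stable pairs in the sense of \cite{Hac01, DH18}, hence algebraic, and each object $(X, cD)$ has ample log canonical class $K_X + cD$ for the relevant coefficient $c$, which forces $\Aut(X, cD)$ to be finite and (in characteristic zero) reduced. Since $\mfp_0$ is an open substack of $\mfp$, it inherits being Deligne–Mumford.

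Smoothness is the deformation-theoretic heart of the statement, and I would prove it by showing that the $\Q$-Gorenstein deformation functor of each pair $(X, D) \in \mfp_0$ is unobstructed. The obstructions lie in a global $\mathrm{Ext}^2$ of the log cotangent complex of the pair, whose associated graded pieces under the local-to-global spectral sequence are $H^2(X, \mathcal{T}^0)$, $H^1(X, \mathcal{T}^1)$, and $H^0(X, \mathcal{T}^2)$, where $\mathcal{T}^0 = \mathcal{T}_X(-\log D)$ and $\mathcal{T}^1, \mathcal{T}^2$ are the $\Q$-Gorenstein local deformation sheaves. Since the latter are supported on the finite singular locus, the term $H^1(X,\mathcal{T}^1)$ vanishes automatically, and it suffices to prove $H^0(X,\mathcal{T}^2) = 0$ and $H^2(X,\mathcal{T}^0) = 0$. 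The first is purely local: at each singular point of $X$ I must show $\mathcal{T}^2 = 0$ in the $\Q$-Gorenstein sense. For du Val points this is classical, and for the index-two class $T$ singularities it follows by passing to the index-one cover, an ADE singularity on which $\Q$-Gorenstein deformations correspond to equivariant deformations and are therefore unobstructed—exactly the mechanism underlying Hacking's theory; the simple elliptic case I would handle by a direct computation of $T^2$. Because $D$ carries only ADE singularities disjoint from $\mathrm{Sing}(X)$ and $c$ lies in the log canonical range, the curve contributes no further local obstruction. For the global vanishing I would use Serre duality, $H^2(X, \mathcal{T}^0)^\vee \cong H^0(X, \Omega^1_X(\log D)(K_X))$, together with the residue sequence $0 \to \Omega^1_X(K_X) \to \Omega^1_X(\log D)(K_X) \to \OO_D(K_X) \to 0$; since $K_X|_D$ has negative degree and $X$ is a rational degeneration of a del Pezzo surface with $-K_X$ positive, both outer terms should have no sections. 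Collapsing the spectral sequence then yields smoothness.

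For the diagram, the forgetting map $j \colon \mfp_0 \dashrightarrow \overline{\M}_6$ sends $(X,D)$ to the stable limit of the curve $D$; it is birational because a general genus six curve lies on a unique quintic del Pezzo surface, recovering the pair. The morphism $\tilde\varphi$ extends $\varphi$ by performing, uniformly over $\mfp_0$, the double cover of $X$ branched along $D \in |-2K_X|$: by adjunction this cover is a (possibly singular) $K3$ surface, whose ADE singularities leave the period point inside $D/\Gamma$ and whose worse degenerations—coming from the simple elliptic singularities of $X$—map to the Baily–Borel boundary of $(D/\Gamma)^\ast$. Checking that this assignment is a genuine morphism of stacks, and that the triangle commutes where $\varphi$ is defined, is mainly a matter of spreading the double cover construction out in families over $\mfp_0$.

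The surjectivity of $j|_{\mfp_0^{\text{sm}}}$ onto $\M_6 \setminus \mathcal{H}_6$ I regard as the main obstacle and the most paper-specific step. One inclusion is soft: a pair in $\mfp_0^{\text{sm}}$ has $D$ smooth of class $-2K_X$, hence of genus six, and its canonical embedding as a divisor on a del Pezzo–type surface precludes $D$ from being hyperelliptic, so the image lands in $\M_6 \setminus \mathcal{H}_6$. The reverse inclusion requires realizing \emph{every} non-hyperelliptic smooth genus six curve as the divisor $D$ in some pair of $\mfp_0^{\text{sm}}$. For non-special curves this is immediate from \cite[Proposition 1.1]{AK11}, since they lie on a weak quintic del Pezzo surface. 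The real work is for the remaining special curves—trigonal, bielliptic, and plane quintic—which by construction do \emph{not} lie on any weak del Pezzo in $\PP^5$; for these I would exhibit explicit degenerate surfaces $X$ (carrying class $T$ and/or simple elliptic singularities) on which the given special curve appears as a smooth divisor of class $-2K_X$, so that $(X,D) \in \mfp_0^{\text{sm}}$. Producing these degenerate surfaces case by case, and verifying that the resulting pairs are stable and lie in $\mfp_0$, is where I expect the bulk of the difficulty to concentrate.
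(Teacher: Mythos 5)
Your outline matches the paper's strategy point for point---Deligne--Mumford is inherited formally from the ambient stack of stable pairs, smoothness comes from unobstructedness of the local $\Q$-Gorenstein deformations plus vanishing of local-to-global obstructions, $\tilde\varphi$ is the double-cover construction in families, and surjectivity of $j|_{\mfp_0^{\text{sm}}}$ reduces to exhibiting pairs over the special curves. But the step you yourself flag as ``where the bulk of the difficulty concentrates'' is precisely the content of the paper's proof, and you leave it empty. The paper's Section 4 carries it out: for a marked plane quintic $(D,E)$ one blows up the five points of a line $\ell$ with $\ell|_D=E$ and contracts the strict transform of $\ell$ (and any deeper exceptional chains) to get a $\tfrac14(1,1)$ point; for trigonal curves one does the analogue on $\PP^1\times\PP^1$ or $\F_2$, producing $\tfrac14(1,1)$ or $\tfrac1{4(a_1+1)}(1,2a_1+1)$ class $T$ points; for bielliptic curves one takes the degree-$5$ elliptic cone. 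One must then verify, by explicit intersection theory on the minimal resolution, that $D\sim -2K_X$, that $-K_X$ is ample, and that $K_X^2=5$, and invoke the smoothability criterion (Theorem~\ref{smooth}, resting on Lemma~\ref{lift} and Lemma~\ref{Pic}) to conclude the pair deforms to $(\Sigma_5,C)$ and hence lies in the component $\mfp$ at all --- membership in $\mfp_0$ is not automatic from stability. Without these constructions and verifications the surjectivity statement, and indeed the nonemptiness of the relevant boundary loci, is unproven.

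Two smaller discrepancies. For smoothness, the paper does not run the cotangent-complex spectral sequence directly: it cites \cite[Theorem 3.1]{HP10} for the absence of local-to-global obstructions, \cite[Proposition 3.3]{Has99} to pass from surfaces to pairs (noting that $D$ avoiding $\operatorname{Sing}X$ is what makes this work when $D$ has ADE singularities), and Pinkham for the degree-$5$ simple elliptic singularity. On that last point your plan to show $T^2=0$ by direct computation is the wrong target: what is true and what is needed is smoothness of the (automatically $\Q$-Gorenstein, since Gorenstein) deformation space, which does not require $T^2$ to vanish. Finally, extending $\tilde\varphi$ over the elliptic-cone pairs is not merely ``spreading out in families'': the paper uses that the branched double covers have insignificant limit singularities in the sense of Shah, so that the period point of a one-parameter smoothing is independent of the smoothing, and then uses smoothness of $\mfp_0$ to extend the resulting rational map to a morphism.
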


In the statement of this theorem, $(D/\Gamma)^\ast$ denotes the Satake-Baily-Borel compactification of $D/\Gamma$. The content of this theorem is that $\mfp_0^\text{sm}$ resolves the map $\varphi$ over plane quintic, trigonal, and bielliptic curves (all special curves except the hyperelliptics). The proof of this theorem will involve explicit construction of stable pairs $(X, D)$ containing special genus six curves. Using these pairs, we verify surjectivity of $j$ over smooth non-hyperelliptic curves. Table~\ref{table:pairs} in Section 4 gives a complete list of the pairs we construct. We note that these pairs lie in three distinct boundary loci in $\mfp_0$, denoted $\ZZ_1$, $\ZZ_2$, and $\ZZ_3$. By ``boundary" here, we mean pairs $(X, D)$ such that $X$ is singular and does not have du Val singularities. We describe these boundary loci below by giving the dimension and general member of each.

\begin{enumerate} \label{bdry}
\item $\ZZ_1$: $14$ dimensional (a divisor). The general member is a pair $(X, D)$ where $X$ is constructed by choosing a line transverse to a smooth plane quintic curve in $\PP^2$, blowing up the $5$ points of intersection, and contracting the strict transform of the line. This contraction produces a $\frac{1}{4}(1,1)$ cyclic quotient singularity. The curve $D$ is the image of the quintic in $X$.

\item $\ZZ_2$: $14$ dimensional (a divisor). The general member is a pair $(X, D)$ where $X$ is constructed by first choosing a trigonal curve of genus six $C$ on $\PP^1 \times \PP^1$ and a ruling $e$ meeting $C$ transversely in $4$ points. Then we blow up the four points of intersection between $C$ and $e$ and contract the strict transform of $e$. This contraction also produces a $\frac{1}{4}(1,1)$ cyclic quotient singularity. The curve $D$ is the image of $C$ in $X$. 

\item $\ZZ_3$: $10$ dimensional. The general member is a pair $(X, D)$ where $X$ is a cone in $\PP^5$ over an elliptic curve embedded in $\PP^4$ via a degree $5$ line bundle and $D$ is a smooth quadric section of $X$ (a bielliptic curve).
\end{enumerate}

Moreover, we verify that given any pair $(X,D)$ in $\mfp_0$, the double cover of $X$ branched along $D$ yields a (degeneration of a) $K3$ surface with
``insignificant limit singularities" (see \cite{Sh79}, \cite{Sh80} for the definition of such singularities). In dimension $2$, these are precisely the Gorenstein semi-log canonical (slc) singularities. Since the period map for $K3$ surfaces extends over degenerations with such singularities, the map $\tilde \varphi$ is indeed a morphism as asserted in the theorem (see Proposition~\ref{K3} and Proposition~\ref{period}). The $K3$ surfaces associated to the pairs $(X, D)$ over plane quintics and trigonal curves will be closely related to components of the discriminant divisor described by Artebani and Kond\=o (we discuss this in Remark~\ref{notable}). 

We also remark that we can resolve the map $j$ via the simultaneous stable reduction of Casalaina-Martin and Laza for families of ADE curves (see {\cite[Theorem 3.5, Corollary 6.3]{CML13}}). We will see in Section 4 that this process yields a space that also resolves $\varphi$ over the hyperelliptic curves (see Remark~\ref{sim_st}).

We remark that one motivation for using stable pairs to resolve $\varphi$ stems from the Hassett-Keel program for genus six curves. In \cite{Mu14}, M\"uller shows that the final log canonical model of $\overline {\mathcal{M}}_6$ parametrizes quadric sections of $\Sigma_5$. Given a one-parameter degeneration of quadric sections of $\Sigma_5$ over the germ of a smooth curve, we can modify it so that the new special fiber is a stable pair. This stable reduction process involves applying techniques from the minimal model program. By enumerating singular quadric sections of $\Sigma_5$ by topological type and running this process, the hope is to construct a complete list of the pairs in $\mfp$. We describe some explicit examples of this process in Section $5$, but since the list of singular quadric sections of $\Sigma_5$ is quite long, we save this approach for future work. 

We should also remark that in \cite{HL10}, the authors identify both Hacking's compactification of plane quartic curves and the Satake-Baily-Borel compactification of Kond\=o's period space in \cite{Kon00} with certain log canonical models of $\overline{\mathcal{M}}_3$. One might ask: How do $\mfp$ and $(D/\Gamma)^\ast$ fit into the Hassett-Keel story for genus six curves? We also leave this question for future work.

The paper is organized as follows. Section $2$ will describe some salient features of the geometry of special genus six curves. In Section $3$, we will recall the theory of stable pairs and establish a smoothability criterion for such pairs with mild surface singularities. Section $4$ will be devoted to proving Theorem~\ref{mainthm}. The proof will entail explicitly constructing surface-curve pairs using the geometry of special curves and then applying the smoothability criterion. Section $5$ gives some examples of computing stable limits of one-parameter degenerations of quadric sections of $\Sigma_5$, and we recover some of the pairs constructed in Section $4$.

\section{Geometry of special curves}
\numberwithin{theorem}{section}
\counterwithin{theorem}{subsection}
\numberwithin{equation}{section}
\counterwithin{equation}{subsection}
In this section, for each smooth non-hyperelliptic special curve $C$ of genus six mentioned in the introduction, we give a natural surface $S$ into which $C$ embeds. This will guide our search for stable pairs containing a given curve. We also introduce stratifications of plane quintic and trigonal curves after specifying certain marked divisors. Throughout this paper, $\F_{n}$ will denote the Hirzebruch surface $\PP(\OO_{\PP^1} \oplus \OO_{\PP^1}(-n))$.

\vspace{.1in}
\subsection{Plane quintics} Of course, such a curve embeds in $\PP^2$. 

\begin{definition}
A \emph{marked plane quintic curve} is a pair $(C, E)$ where $C$ is a plane quintic curve and $E$ is a hyperplane section. 
\end{definition}

In Section $4$, for each marked smooth plane quintic curve $(C, E)$, we exhibit a stable pair containing $C$. Marked smooth plane quintic curves $(C, E)$ are stratified by partitions $(a_1, \dots ,a_5)$ of $5$; the partition represents the non-zero coefficients of the points in the support of $E$. For example, a pair $(C, E)$ of type $(1,1,1,1,1)$ means that $E=\ell|_C$, where $\ell$ is a line transverse to $C$. On the other hand, a pair $(C, E)$ of type $(5)$ means that $E=\ell|_C$, where $\ell$ meets $C$ in a single point with intersection multiplicity $5$.

\vspace{.1in}

\subsection{Trigonal curves} Recall the construction of a rational normal surface scroll in $\PP^{g-1}$. For two non-negative integers $a$ and $b$ such that $a+b=g-2$, a rational normal surface scroll $S_{a,b}$ is the join of two rational normal curves of degrees $a$ and $b$ with complementary linear spans. Equivalently, $S_{a,b}$ can be defined as the rational ruled surface $\PP(\OO_{\PP^1}(-a)\oplus \OO_{\PP^1}(-b))$. 

Now, consider a smooth trigonal curve $C \subset \PP^{g-1}$. The linear system of quadrics containing $C$ cuts out a rational normal surface scroll $S_{a,b}$ (see \cite[Proposition 3.1]{ACGH85}). We now define some numerical invariants associated to the embeddings of smooth trigonal curves in scrolls that help us stratify such curves. 

\begin{definition}
Let $S_{a,b}$ denote the rational normal surface scroll containing a given smooth trigonal curve $C$. The quantity $M=|a-b|$ is called the \emph{Maroni invariant} of $C$.
\end{definition}    

Tautologically, a smooth trigonal curve $C$ of Maroni invariant $M$ embeds into the Hirzebruch surface $\F_M$. We note that for genus six, there are only two possible values for $M$: $0$ and $2$. When $M=0$, by genus considerations, such a curve has class $3e+4f$ on $\PP^1 \times \PP^1$, where $e$ and $f$ denote the classes of the two rulings.

When $M=2$, such a curve has class $3e+7f$ on $\F_2$, where $e$ denotes the negative section and $f$ denotes the fiber class of the projection $\F_2 \rightarrow \PP^1$ (the latter cuts out the $g^1_3$ on $C$). The negative section has a unique point of intersection with $C$; denote this point $p$. Let $f_p$ denote the unique fiber containing $p$.

Any smooth trigonal curve $C$ of genus six has not only a unique $g_3^1$ but also a unique $g^1_4$ of class $K_C-2g^1_3$. If $C$ has Maroni invariant $0$, then this $g_4^1$ is cut out by $e$ on $\PP^1 \times \PP^1$. If $C$ has Maroni invariant $2$, the $g_4^1$ is cut out by $e+f$. 

\begin{definition}
A \emph{marked trigonal curve of genus six} is a pair $(C, E)$ where $C$ is a trigonal curve of genus six and $E$ is a divisor in the unique $g^1_4$ associated to $C$. 
\end{definition}

In Section $4$, for each marked smooth trigonal curve of genus six $(C, E)$, we exhibit a stable pair containing $C$. We will use the following notation to stratify marked smooth trigonal curves of genus six $(C, E)$:

\begin{enumerate}
\item \emph{Type $(2; [a_1], a_2, a_3, a_4)$}: A pair $(C, E)$ is of this type if $C$ has Maroni invariant $2$ and
\begin{equation*}
E=a_1p+\displaystyle \sum_i a_i p_i.
\end{equation*}
Note that when $C$ has Maroni invariant $2$, the point $p$ is always in the support of $E$. The $a_i$ necessarily form a partition of $4$. Note that $a_1>1$ if and only if $E=(e+f_p)|_C$, and $a_1=1$ if and only if $E=(e+f_0)|_C$ for some fiber $f_0 \neq f_p$. 

\item \emph{Type $(0; b_1, b_2, b_3, b_4)$}: A pair $(C, E)$ is of this type if $C$ has Maroni invariant $0$ and
\begin{equation*}
E=\displaystyle \sum_i b_ip_i.
\end{equation*}
The $b_i$ necessarily form a partition of $4$. 

\end{enumerate}

\subsection{Bielliptic curves}\label{biell_geo} A bielliptic curve is one that admits a $2:1$ cover of an elliptic curve. A smooth genus six bielliptic curve can be realized as a quadric section of a cone in $\PP^5$ over a smooth elliptic curve embedded in $\PP^4$ via a degree $5$ line bundle. The curve avoids the vertex of the cone (see 
\cite[Lemma 3.3]{Ko05}, for example). Moreover, for a bielliptic curve of genus six (in fact for genus greater than five), the bielliptic involution is unique (see \cite[Chapter $5$]{Acc94}, for example) and the quotient by this involution is isomorphic to the exceptional elliptic curve for the minimal resolution of this cone.

\section{Moduli of stable pairs}
\numberwithin{theorem}{section}
\counterwithin{theorem}{section}
\numberwithin{equation}{section}
\counterwithin{equation}{section}
In this section, we outline the theory of stable pairs. We refer the reader to \cite{Hac01}, \cite{Hac04}, and \cite{DH18} for more details. The key idea is that the forthcoming definitions allow us to construct the moduli stacks $\mfp$, $\mfp_0$, and $\mfp_0^\text{sm}$ (recall Definition~\ref{stacks}).
\begin{definition}
Let $X$ be a surface and $D$ an effective $\Q$-divisor on $X$. The pair $(X,D)$ is said to be \emph{semi log canonical (slc)} (resp. semi log terminal (slt)) if the following conditions hold:

\begin{enumerate}
\item $X$ is Cohen-Macaulay and has at worst normal crossings singularities in codimension $1$.
\item The divisor $K_X +D$ is $\Q$-Cartier. 
\item Let $\nu: X^\nu \rightarrow X$ denote the normalization of $X$, $\delta$ the double curve of $X$, $D^\nu$ and $\delta^\nu$ the inverse images of $D$ and $\delta$. Then the pair $(X^\nu, \delta^\nu+D^\nu)$ is log canonical (resp. log terminal). 
\end{enumerate}
\end{definition}

\begin{definition}[{\cite[Definition 2.1]{DH18}}]\label{stable_pair}
Let $m,n$ be positive co-prime integers with $m < n$. Let $X$ be a projective, reduced, connected, Cohen-Macaulay surface and $D$ an effective Weil divisor on $X$. We say that $(X,D)$ is a \emph{stable pair of type (m,n)} if the following conditions hold:

\begin{enumerate}
\item No component of $D$ is contained in the singular locus of $X$. 
\item For some $\epsilon>0$, the pair $(X, (m/n+\epsilon)D)$ is slc, and the divisor $K_X+(m/n+\epsilon)D$ is ample.

\item The divisor $nK_X+mD$ is linearly equivalent to zero.

\item $\chi(\OO_X)=1$. 
\end{enumerate}
\end{definition}

\begin{definition}[{\cite[Definition 2.3]{DH18}}] \label{qg1}
A \emph{$\Q$-Gorenstein family of stable pairs of type $(m,n)$} is a pair $(\pi: \X \rightarrow T, \D \subset \X)$, where $\D$ is a relative effective Weil divisor and $\pi$ is a flat, proper, Cohen-Macaulay morphism with slc surfaces as geometric fibers, satisfying the following additional conditions:

\begin{enumerate}
\item $\omega_\pi^{[i]}$ commutes with base change for every $i \in \Z$, and on each geometric fiber, some reflexive power of $\omega_\pi$ is invertible.
\item $\OO_X(\D)^{[i]}$ commutes with base change for every $i \in \Z$.
\item Each geometric fiber is a stable pair of type $(m,n)$.

\end{enumerate}
\end{definition}

For brevity, we will occasionally write ``stable pair" and omit ``of type $(m,n)$." We will eventually specialize to the case $(m,n)=(1,2)$. Geometrically, $\Q$-Gorenstein families of stable pairs are those which lift locally to canonical coverings (to be defined below). It is often more convenient to use this geometric definition when discussing $\Q$-Gorenstein deformations of singularities. We formally define canonical cover and the geometric version of $\Q$-Gorenstein deformation of a stable pair below, following \cite{Hac04}. Recall that the \emph{index} of a $\mathbb{Q}$-Cartier Weil divisor $D$ at a point $P$ in a normal variety $X$ is the smallest positive integer such that $ND$ is Cartier near $P$. 

\begin{definition}{\label{can_cover}}
Let $P \in X$ be an slc surface germ of index $N$. The \emph{canonical covering} $\pi: Z \rightarrow X$ is defined by
\begin{equation*}
Z= \Spec_X (\OO_X \oplus \OO_X(K_X) \oplus \cdots \oplus \OO_X((N-1)K_X)),
\end{equation*}
where the multiplication structure is determined by a choice of isomorphism $\OO_X(NK_X) \cong \OO_X$. 
\end{definition}

We will also use the terminology \emph{index one cover} to express the same idea (recall that $K_Z$ is Cartier, hence has index $1$). Let $\xi_N$ be a primitive $N^\text{th}$ root of unity. There is a natural $\mu_N$ action on each $\OO_X(iK_X)$ given by multiplication by $\xi^i_N$, and we note that the canonical covering morphism $\pi$ is a cyclic quotient of degree $N$ by the induced action on $Z$. 

\begin{definition}\label{qg}
Let $(P \in X, D)$ be the germ of a stable pair, $N$ be the index of $X$, $Z \rightarrow X$ the canonical covering, and $D_Z$ the inverse image of $D$. We say that a deformation $(\X, \D)/S$ of $(X, D)$ is \emph{$\Q$-Gorenstein} if there is a $\mu_N$-equivariant deformation $(\ZZ, \D_\ZZ)/S$ of $(Z, D_Z)$ extending the natural $\mu_N$ action on $Z$ whose $\mu_N$ quotient is $(\X, \D)/S$. 
\end{definition}

\begin{remark} \label{index}
We say that $(X,D)$ satisfies the \emph{index condition} if the divisorial pullback of $D$ to the canonical covering at every surface germ of $X$ is Cartier. For stable pairs of type $(m,n)=(1,2)$, this condition is vacuous. See {\cite[Definition 2.4]{DH18}} for more details. We note that Definition~\ref{qg} is equivalent to conditions $(1)$ and $(2)$ of Definition~\ref{qg1} if the index condition holds.

\end{remark}

It is clear how to modify the definition of $\Q$-Gorenstein family in the context of surfaces (no marked curve) or surface germs: simply forget all conditions involving the marked divisor. 

\begin{definition}
We say that a stable pair $(X,D)$ is \emph{smoothable} if there is a $\Q$-Gorenstein deformation $(\X, \D)/\Delta$ of $(X,D)$ over the germ of a smooth curve such that the generic fiber $\X_\eta$ of $\X/\Delta$ is smooth. 
\end{definition}

It follows from parts $(2)$ and $(3)$ of Definition~\ref{stable_pair} that for a stable pair $(X,D)$, the divisors $-K_X$ and $D$ are both ample. In particular, if $(X,D)$ is smoothable, $X$ must smooth to a del Pezzo surface.

\begin{theorem}[{\cite[Theorem 2.5]{DH18}}]
There is a Deligne-Mumford stack $\mathfrak{F}$ whose objects are $\mathbb{Q}$-Gorenstein families of stable pairs of type $(m,n)$ satisfying the index condition.
\end{theorem}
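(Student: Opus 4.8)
The plan is to realize $\mathfrak{F}$ as a quotient stack $[H/\PGL_{N+1}]$ for a suitable locally closed parameter scheme $H$ sitting inside a relative Hilbert scheme, and then to read off the Deligne--Mumford property from the ampleness built into Definition~\ref{stable_pair}. I would proceed in four steps, the second of which I expect to be the main obstacle.

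First, boundedness. Conditions $(2)$ and $(3)$ of Definition~\ref{stable_pair} force $-K_X$ to be ample: from $nK_X+mD\sim 0$ one gets $D\sim_\Q -(n/m)K_X$, so that $K_X+(m/n+\epsilon)D\sim_\Q -\epsilon(n/m)K_X$, which is ample. Together with $\chi(\OO_X)=1$, the pairs $(X,D)$ therefore range over slc log del Pezzo surfaces of controlled numerical type, and by boundedness results for slc surfaces of bounded volume (in the style of Alexeev) I can fix an integer $r$, divisible by the index of every germ that occurs, so that $\omega_X^{[r]}$ is an invertible, very ample sheaf embedding every $X$ into a common $\PP^N$ with a fixed Hilbert polynomial.

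Second, the parameter scheme. Using this uniform embedding I would build $H$ as a locally closed subscheme of a relative Hilbert scheme of flags $D\subset X\subset \PP^N$. The Cohen--Macaulay, reducedness, $\chi(\OO_X)=1$, ampleness, and linear-equivalence conditions are standard open or locally closed conditions. The genuinely delicate requirements are $(1)$ and $(2)$ of Definition~\ref{qg1}, namely that the reflexive powers $\omega_\pi^{[i]}$ and $\OO_X(\D)^{[i]}$ commute with base change. These are Koll\'ar's conditions, and imposing them --- via the theory of reflexive hulls (the ``hull and husk'' formalism) --- is what carves out the $\Q$-Gorenstein locus as a locally closed subfunctor and guarantees representability. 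This is the technical heart of the argument and the step I expect to be hardest, since it is precisely where the deformation theory of non-Gorenstein slc singularities enters; the index condition (Remark~\ref{index}) and the canonical covering (Definition~\ref{can_cover}) are the devices that translate these base-change conditions into the equivariant picture of Definition~\ref{qg}.

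Third, the group quotient. The group $\PGL_{N+1}$ acts on $H$ by changing the projective embedding, and two points of $H$ give isomorphic $\Q$-Gorenstein families of pairs exactly when they lie in a common orbit. Since all of the defining conditions are local on the base, \'etale (or fppf) descent shows that the associated fibered category is a stack isomorphic to $[H/\PGL_{N+1}]$, hence an algebraic stack. Finally, for the Deligne--Mumford property it suffices to check that the diagonal is unramified, equivalently that each stable pair $(X,D)$ has no nonzero infinitesimal automorphisms. Because $K_X+(m/n+\epsilon)D$ is ample, any automorphism of the pair preserves this polarization and so lies in a finite group; infinitesimally, a global vector field on $X$ preserving $D$ would generate a one-parameter family of automorphisms fixing an ample class, forcing it to vanish. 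Thus the stabilizers in $\PGL_{N+1}$ are finite and reduced, and $\mathfrak{F}=[H/\PGL_{N+1}]$ is Deligne--Mumford.
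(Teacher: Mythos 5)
The paper does not prove this statement: it is imported verbatim as Theorem~2.5 of \cite{DH18}, so there is no internal proof to compare against. Measured against the actual construction in \cite{DH18} (and its antecedents in Hacking and Koll\'ar), your outline is essentially the standard and correct route: boundedness from the ampleness of $-K_X$ and $\chi(\OO_X)=1$, a locally closed locus in a flag Hilbert scheme, Koll\'ar's hull-and-husk formalism to cut out the locus where $\omega_\pi^{[i]}$ and $\OO_{\X}(\D)^{[i]}$ commute with base change, a $\PGL_{N+1}$-quotient, and finiteness of automorphisms for the Deligne--Mumford property. You have also correctly identified the base-change/reflexive-power conditions as the technical heart; that is exactly where \cite{DH18} leans on Koll\'ar.

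Two points need repair. First, a sign slip: since $-K_X$ is ample for these pairs, $\omega_X^{[r]}$ with $r>0$ is anti-ample; you should embed by $\omega_X^{[-r]}$ (equivalently by a multiple of $K_X+(m/n+\epsilon)D\sim_{\Q}-\epsilon(n/m)K_X$). Second, your argument for the unramified diagonal is not sufficient as stated: a vector field generating automorphisms that fix an ample class need not vanish (witness $\PP^2$ with $\OO(1)$). What you actually need is that an slc pair with \emph{ample log canonical divisor} $K_X+(m/n+\epsilon)D$ has finite automorphism group --- the log-general-type finiteness theorem, proved by ruling out $\mathbb{G}_a$- and $\mathbb{G}_m$-actions via the log canonical ring --- and then, in characteristic zero, finiteness of the automorphism group scheme gives reducedness, hence no infinitesimal automorphisms. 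With those two corrections your sketch aligns with the proof in \cite{DH18}.
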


\begin{definition} \label{aux_stacks}
Fix $(m,n)=(1,2)$. Let $\mathfrak{F}_{K^2=5} \subset \mathfrak{F}$ be the open and closed substack parametrizing stable pairs $(X,D)$ with $K_X^2=5$. Let $\mathfrak{P}$ denote the component of $\mathfrak{F}_{K^2=5}$ whose general point is a pair $(\Sigma_5, C)$ where $C$ is smooth of class $-2K_{\Sigma_5}$.  
\end{definition} 

We note that $\mathfrak{F}_{K^2=5}$ is in fact an open and closed substack of $\mathfrak{F}$ since $K^2$ (and moreover $(K+D)^2$) is constant in $\mathbb{Q}$-Gorenstein families of stable pairs (see, for example, \cite{Has99}). Also, now it makes sense to define the open substacks $\mfp_0$ and $\mfp_0^\text{sm}$ of Definition~\ref{stacks} (the openness follows from the fact that the set of allowed singularity types for pairs in these substacks is closed under $\mathbb{Q}$-Gorenstein deformation).

\begin{remark}
The properness of the stack $\mathfrak{F}$ in general is a delicate issue. There is a partial valuative criterion properness proven in \cite[Proposition 2.11]{DH18}: Up to base change, a family over a DVR with smooth generic fiber can be completed to a family where the special fiber is a stable pair. Moreover, this new family will be $\mathbb{Q}$-Gorenstein if the special fiber satisfies the index condition (recall Remark~\ref{index}), but there is no reason that the index condition should hold a priori. However, as noted in Remark~\ref{index}, for stable pairs of type $(m,n)=(1,2)$, the index condition holds vacuously. Hence $\mfp$ is proper.
\end{remark}

We will now give some properties of stable pairs of type $(m,n)$ and their families. We begin with a description of some singularities that arise on stable pairs and conclude with a smoothability criterion for pairs with such singularities. 

\begin{definition}
Fix co-prime positive integers $a$ and $r$ with $a<r$. Let $\Z/r\Z$ act on $\C^2$ via the diagonal matrix
$$\begin{pmatrix}
\xi_r & 0 \\ 0 & \xi_r^a
\end{pmatrix},$$
where $\xi_r$ is a primitive $r^{\text{th}}$ root of unity. The resulting singularity is called a \emph{cyclic quotient singularity of type $\frac{1}{r}(1,a)$}.
\end{definition}

Such singularities are uniquely determined by their minimal resolutions. The exceptional locus of the minimal resolution of a cyclic quotient singularity of type $\frac{1}{r}(1, a)$ is a chain of rational curves $E_1, \dots, E_n$ with self-intersections $E^2_i=-c_i<0$ for all $i$. The $c_i$ can be computed via the continued fraction 
\begin{equation} \label{cfrac}
\frac{r}{a}=c_1 - \frac{1}{c_2-\frac{1}{c_3-\dots}}.
\end{equation}

Conversely, given $E_i$, $c_i$, and a continued fraction representation as in ~(\ref{cfrac}), we say that the singularity created by contracting the $E_i$ is of type $\frac{1}{r}(1,a)$. We remark that this notation depends on one of the two possible orderings of the $E_i$.

\begin{definition}[{\cite[Definition 3.7]{KSB88}}]
\label{classT}
A surface singularity is said to be of \emph{class $T$} if it is a cyclic quotient singularity and admits a $\Q$-Gorenstein one-parameter smoothing. 
\end{definition}

In the definition of class $T$ given in \cite{KSB88}, a deformation $\X/S$ is said to be $\Q$-Gorenstein if $K_\X$ is $\Q$-Cartier. This is an a priori weaker condition than the notion of $\Q$-Gorenstein given in Definition~\ref{qg}. However, as remarked in {\cite[Section 2.1]{HP10}}, the two notions coincide when the central fiber $X$ has quotient singularities and the base $S$ is a smooth curve. There is a well known classification of class $T$ singularities due to Koll\'ar and Shepherd-Barron which we now present.

\begin{proposition}[{\cite[Proposition 3.10]{KSB88}}]\label{Classt}
A class $T$ singularity is either a rational double point (ADE, du Val) or a cyclic quotient singularity of type 
\begin{equation} \label{p,q}
\frac{1}{p^2q}(1, dpq-1)
\end{equation}
where $p,q$ are integers and $d$ is co-prime to $p$. 
\end{proposition}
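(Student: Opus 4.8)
The plan is to invoke the geometric characterization of $\Q$-Gorenstein smoothings from Definition~\ref{qg}: a cyclic quotient singularity $X=\frac1r(1,a)$ is of class $T$ precisely when its canonical covering $Z\to X$ admits a $\mu_N$-equivariant one-parameter smoothing, where $N$ is the index of $X$. Rational double points are hypersurface singularities and hence always smoothable, which accounts for the first alternative; so throughout I would assume $X$ is \emph{not} du Val and aim to force the normal form~(\ref{p,q}).

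First I would compute the canonical cover explicitly. Writing $X=\C^2/\mu_r$ with $\mu_r$ acting by $(x_1,x_2)\mapsto(\xi_r x_1,\xi_r^a x_2)$, a class-group computation gives index $N=r/\gcd(r,a+1)$. Setting $d=\gcd(r,a+1)$, the subgroup $\mu_d\subset\mu_r$ cuts out $Z=\C^2/\mu_d$, and since $d\mid(a+1)$ one has $a\equiv -1\pmod d$, so $Z$ is the Gorenstein cyclic quotient $\frac1d(1,d-1)$, i.e. the $A_{d-1}$ hypersurface singularity $\{xy=z^d\}$ with invariant coordinates $x=x_1^d,\ y=x_2^d,\ z=x_1x_2$. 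The residual group $\mu_N=\mu_r/\mu_d$ then acts on $Z$ with weights $\mathrm{wt}(x)=d$, $\mathrm{wt}(y)=ad$, $\mathrm{wt}(z)=a+1$ read modulo $r$; these are all divisible by $d$, so they descend to genuine $\mu_N$-characters.

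Next I would analyze the equivariant smoothing. The miniversal deformation of $\{xy=z^d\}$ is $xy=z^d+\sum_{0\le i<d}t_i z^i$, and the $\mu_N$-invariant sub-deformation is spanned by the monomials $z^i$ whose weight agrees with that of $xy$. A $\Q$-Gorenstein smoothing of $X$ exists iff this invariant family actually smooths $Z$, which, after accounting for the freedom to twist the deformation parameter by a character of $\mu_N$, reduces to a single congruence relating $r$, $a$, and $N=r/d$. Carrying out this weight bookkeeping is the technical heart of the argument: one must show the congruence is solvable exactly when $r=p^2q$ and $a+1=dpq$ with $\gcd(d,p)=1$, i.e. when $X=\frac{1}{p^2q}(1,dpq-1)$, reproducing~(\ref{p,q}). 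I expect the main obstacle to be precisely this step — keeping the $\mu_N$-weights mutually consistent and then translating the resulting divisibility condition into the stated arithmetic normal form without losing or introducing spurious cases.

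Finally, I would note that the same classification can be reached combinatorially through the minimal resolution: the dual graph of a class $T$ singularity is either an ADE graph or is built from the two ``atoms'' $[4]$ and $[3,2,\dots,2,3]$ by the blow-up operations $[c_1,\dots,c_k]\mapsto[2,c_1,\dots,c_{k-1},c_k+1]$ and $[c_1,\dots,c_k]\mapsto[c_1+1,c_2,\dots,c_k,2]$. One checks these operations preserve both the class $T$ property and the normal form~(\ref{p,q}) via the continued fraction~(\ref{cfrac}), and that every class $T$ graph arises this way. This yields an independent verification of the proposition and, usefully for the rest of the paper, a concrete recipe for enumerating the singularities in play.
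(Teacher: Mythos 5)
The paper offers no proof of this statement --- it is quoted directly from \cite[Proposition 3.10]{KSB88} --- so there is no internal argument to compare against; what you have written is a reconstruction of the classical proof. Your route (index-one cover, then the $\mu_N$-equivariant part of the miniversal deformation of the resulting $A$-type hypersurface singularity) is the standard one, and the closing combinatorial description via the atoms $[4]$ and $[3,2,\dots,2,3]$ is the companion result \cite[Proposition 3.11]{KSB88}; the outline is sound. Three remarks on the step you defer. First, the weight bookkeeping collapses quickly: writing $\delta=\gcd(r,a+1)$ (your $d$ --- note this clashes with the $d$ of the statement, which is instead $(a+1)/\delta$) and $N=r/\delta$, the coefficient of $z^i$ in $xy=z^{\delta}+\sum t_iz^i$ must be an invariant, i.e. $r\mid(1+a)(\delta-i)$; since $\gcd\bigl((1+a)/\delta,\,N\bigr)=1$ by definition of the gcd, the allowed exponents are exactly $i\equiv\delta\pmod N$, and a $\Q$-Gorenstein smoothing exists if and only if $i=0$ is allowed, i.e. $N\mid\delta$. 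Setting $p=N$, $q=\delta/N$, $d=(1+a)/\delta$ recovers~(\ref{p,q}) with $\gcd(d,p)=1$. Second, the criterion is not quite ``the invariant family smooths $Z$'': when $N\mid\delta-1$ the invariant family $xy=z^{\delta}+\dots+t_1z$ does smooth $Z$, but the origin is a $\mu_N$-fixed point lying on every nearby fiber, so the quotient is a deformation of $X$ with a residual quotient singularity rather than a smoothing; one must add the freeness check on the general fiber to exclude this case, and that check is what makes $i=0$ (rather than merely ``some smoothing direction'') the right condition. Third, there is no ``freedom to twist the deformation parameter by a character of $\mu_N$'': in Definition~\ref{qg} the group acts trivially on the base, so the coefficient functions must be honest invariants --- fortunately the argument does not need any such twist.
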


We make a few remarks about class $T$ singularities. Class $T$ singularities are precisely the log terminal $\Q$-Gorenstein-smoothable surface singularities ({\cite[Theorem 3.4]{Pr17}}). For a given class $T$ singularity, there is an irreducible component of its deformation space parametrizing $\Q$-Gorenstein deformations. Hence, $\Q$-Gorenstein deformations of class $T$ singularities are class $T$ ({\cite[Theorem 3.9, Section 7]{KSB88}}). A non-du Val class $T$ singularity of the form in ~(\ref{p,q}) has index $p$ and canonical cover of type $A_{pq-1}$. The $\mu_p$ action on the equation 
\begin{equation} \label{Apq}
f=xy+z^{pq}=0
\end{equation}
is given by
\begin{equation} \label{can_action}
(x,y,z) \mapsto (\xi x, \xi^{-1} y, \xi^d z)
\end{equation}
(see the remarks immediately following Proposition 5.3 in \cite{BR95}, for example). We will also need to make use of the following theorem.

\begin{theorem}[{\cite[Theorem 3.1]{HP10}}] \label{lc_big}
Let $X$ be a projective surface with log canonical singularities such that $-K_X$ is big. Then there are no local-to-global obstructions to deformations of $X$. In particular, if the singularities of $X$ admit $\mathbb{Q}$-Gorenstein smoothings, then $X$ admits a $\mathbb{Q}$-Gorenstein smoothing.
\end{theorem}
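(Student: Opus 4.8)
The plan is to reduce both assertions to the single cohomology vanishing $H^2(X,\mathcal{T}_X)=0$, where $\mathcal{T}_X=\mathcal{H}om(\Omega_X,\mathcal{O}_X)$ is the tangent sheaf, and then to extract that vanishing from the bigness of $-K_X$. First I would invoke the local-to-global spectral sequence $E_2^{p,q}=H^p(X,\mathcal{T}^q_X)\Rightarrow T^{p+q}_X$, where $\mathcal{T}^q_X=\mathcal{E}xt^q(\Omega_X,\mathcal{O}_X)$. Because $\mathcal{T}^1_X$ and $\mathcal{T}^2_X$ are supported on the finite singular set, their higher cohomology vanishes, so the only possible global contribution is $E_2^{2,0}=H^2(X,\mathcal{T}_X)$. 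If this group vanishes, the differential $d_2\colon H^0(X,\mathcal{T}^1_X)\to H^2(X,\mathcal{T}_X)$ is zero, the restriction $T^1_X\to H^0(X,\mathcal{T}^1_X)$ is surjective, and $T^2_X\cong H^0(X,\mathcal{T}^2_X)$; equivalently, the morphism of deformation functors $\mathrm{Def}_X\to\prod_p\mathrm{Def}_{(X,p)}$ is formally smooth, which is exactly the statement that all obstructions are local, i.e. there are no local-to-global obstructions.

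Granting the vanishing, the smoothing statement follows formally. By hypothesis each singular germ $(X,p)$ carries a $\mathbb{Q}$-Gorenstein smoothing, i.e. a deformation direction in its local $\mathbb{Q}$-Gorenstein deformation space with smooth nearby fibers. Running the same spectral-sequence argument for the sheaf of $\mathbb{Q}$-Gorenstein first-order deformations (equivalently, working on the canonical-covering Deligne--Mumford stack $\mathcal{X}\to X$, where the relevant obstruction group is $H^2(\mathcal{X},\mathcal{T}_{\mathcal{X}})$) shows that $\mathrm{Def}^{\mathrm{QG}}_X\to\prod_p\mathrm{Def}^{\mathrm{QG}}_{(X,p)}$ is smooth as well. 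Hence the product of the local smoothing directions lifts to a one-parameter $\mathbb{Q}$-Gorenstein deformation of $X$ over $\Delta$; since smoothness of a fiber is an open condition and the chosen direction smooths every singular point to first order, a general fiber is smooth, producing the desired $\mathbb{Q}$-Gorenstein smoothing.

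The heart of the matter, and the step I expect to be the main obstacle, is the vanishing $H^2(X,\mathcal{T}_X)=0$. Every log canonical surface singularity in play (du Val, cyclic quotient, simple elliptic) is Cohen--Macaulay, so Serre duality on the projective surface $X$ gives $H^2(X,\mathcal{T}_X)^\vee\cong\mathrm{Hom}(\mathcal{T}_X,\omega_X)=H^0\!\big(X,(\Omega_X^{[1]}\otimes\omega_X)^{[\,]}\big)$, where $\Omega_X^{[1]}=\mathcal{T}_X^\vee$ denotes reflexive differentials and $\omega_X=\mathcal{O}_X(K_X)$. A nonzero section of this reflexive sheaf is the same as a nonzero map $\mathcal{O}_X(-K_X)\to\Omega_X^{[1]}$ of reflexive sheaves, that is, a rank-one reflexive subsheaf of $\Omega^{[1]}_X$ isomorphic to $\mathcal{O}_X(-K_X)$. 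By a Bogomolov--Sommese-type vanishing for reflexive differentials on log canonical surfaces, any rank-one reflexive subsheaf of $\Omega^{[1]}_X$ has Iitaka dimension at most $1$; but $-K_X$ big forces $\kappa(\mathcal{O}_X(-K_X))=2$, a contradiction, so no such section exists and $H^2(X,\mathcal{T}_X)=0$. Closer to a self-contained argument, one may instead pass to a log resolution $\pi\colon Y\to X$, identify $H^2(X,\mathcal{T}_X)$ with logarithmic tangent cohomology on $Y$ via Grauert--Riemenschneider and the Leray sequence, and apply a logarithmic Kodaira--Nakano vanishing using that the pullback of $-K_X$ is nef and big; the delicate point along this route is the discrepancy bookkeeping for the genuinely non-quotient germs, such as the simple elliptic singularities.
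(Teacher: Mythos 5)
This theorem is imported verbatim from \cite[Theorem 3.1]{HP10}; the paper gives no proof of its own, so the only meaningful comparison is with the source. Your argument is, in outline, exactly the Hacking--Prokhorov proof: reduce both assertions to $H^2(X,\mathcal{T}_X)=0$ via the local-to-global spectral sequence (and its analogue on the canonical covering stack for the $\mathbb{Q}$-Gorenstein statement), then kill that group by Serre duality together with the Bogomolov--Sommese-type bound $\kappa(\mathcal{A})\le 1$ for rank-one reflexive subsheaves $\mathcal{A}\subset\Omega_X^{[1]}$ of a log canonical surface, which contradicts $\kappa(-K_X)=2$. The only caveats are that this last vanishing is itself a substantial cited input rather than something you establish, and that passing from formal smoothness of $\mathrm{Def}^{\mathrm{QG}}_X\to\prod_p\mathrm{Def}^{\mathrm{QG}}_{(X,p)}$ to an honest one-parameter smoothing over $\Delta$ requires effectivity and algebraization of the formal deformation; both points are standard and neither is a genuine gap.
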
 

Before establishing the smoothability criterion, we need two important facts.

\begin{lemma} \label{lift}
Let $(X,D)$ be a stable pair of type $(m,n)$ such that $X$ has class $T$ singularities and $D$ is Cartier. Then $H^1(\OO_D(D))=0$. 
\end{lemma}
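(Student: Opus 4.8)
\emph{Proof proposal.} The plan is to transfer the computation from the curve $D$ to the surface $X$ via the standard restriction sequence and then invoke vanishing. Since $D$ is Cartier, $\OO_X(D)$ is a line bundle, so tensoring the structure sequence $0 \to \OO_X(-D) \to \OO_X \to \OO_D \to 0$ by $\OO_X(D)$ yields the short exact sequence
\begin{equation*}
0 \to \OO_X \to \OO_X(D) \to \OO_D(D) \to 0.
\end{equation*}
The associated long exact sequence contains the fragment
\begin{equation*}
H^1(\OO_X(D)) \to H^1(\OO_D(D)) \to H^2(\OO_X),
\end{equation*}
so it suffices to establish the two vanishings $H^1(\OO_X(D))=0$ and $H^2(\OO_X)=0$.

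For the first, I would apply Kawamata--Viehweg vanishing for surfaces with log terminal singularities. By Proposition~\ref{Classt} every class $T$ singularity is a cyclic quotient singularity, hence klt, so $X$ is a (normal) klt projective surface. Recall that parts $(2)$ and $(3)$ of Definition~\ref{stable_pair} give that $-K_X$ is ample and that $nK_X+mD\sim 0$; numerically this means $D\equiv \tfrac{n}{m}(-K_X)$, and therefore
\begin{equation*}
D-K_X\equiv \tfrac{m+n}{m}(-K_X)
\end{equation*}
is ample, in particular nef and big. Writing $\OO_X(D)=\OO_X(K_X+(D-K_X))$ and applying the klt form of Kawamata--Viehweg vanishing (with $D$ an honest Cartier divisor, so no rounding is needed even though $K_X$ is only $\mathbb{Q}$-Cartier) gives $H^i(\OO_X(D))=0$ for all $i>0$, in particular for $i=1$. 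For the second vanishing, I would use that a normal surface is Cohen--Macaulay, so Serre duality yields $H^2(\OO_X)\cong H^0(\omega_X)^\vee=H^0(\OO_X(K_X))^\vee$. Since $-K_X$ is ample we have $K_X^2>0$, and a nonzero section of $\OO_X(K_X)$ would give an effective divisor $E\sim K_X$ with $E\cdot(-K_X)=-K_X^2<0$, contradicting the ampleness of $-K_X$; hence $H^0(\OO_X(K_X))=0$ and $H^2(\OO_X)=0$. Feeding both vanishings into the long exact sequence forces $H^1(\OO_D(D))=0$.

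The only genuinely delicate point is the appeal to Kawamata--Viehweg vanishing on the singular surface $X$: one must cite the version valid for klt pairs (since $K_X$ is only $\mathbb{Q}$-Cartier) and observe that because $D$ is Cartier, $\OO_X(D)$ is exactly the line bundle to which the theorem applies, so the integrality/rounding subtleties do not arise. The remaining inputs---Serre duality in the Cohen--Macaulay setting and the numerical identity for $D-K_X$---are routine, and the hypothesis that all singularities of $X$ are class $T$ enters precisely to guarantee that $X$ is klt and normal.
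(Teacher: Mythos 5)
Your argument is correct and follows essentially the same route as the paper: the same restriction sequence $0 \to \OO_X \to \OO_X(D) \to \OO_D(D) \to 0$, the vanishing of $H^1(\OO_X(D))$ via Kodaira/Kawamata--Viehweg-type vanishing on the log terminal surface $X$ (the paper simply cites \cite[Lemma 3.14]{Hac04} for this, which you instead derive directly), and $H^2(\OO_X)=0$ from Serre duality and anti-ampleness of $K_X$. The extra care you take with the klt form of the vanishing theorem and the $\mathbb{Q}$-Cartier issues is sound and consistent with what the cited lemma provides.
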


\begin{proof}
By Lemma $3.14$ in \cite{Hac04}, $H^1(\OO_X(D))=0$ since $X$ is log terminal (this is a consequence of Kodaira vanishing). Now, the exact sequence
\begin{equation*}
0 \rightarrow \OO_X \rightarrow \OO_X(D) \rightarrow \OO_D(D) \rightarrow 0
\end{equation*}
induces a long exact sequence in cohomology
\begin{equation*}
\cdots \rightarrow H^1(\OO_X(D)) \rightarrow H^1(\OO_D(D)) \rightarrow H^2(\OO_X) \rightarrow \cdots
\end{equation*}
By Serre duality, $H^2(\OO_X)=H^0(K_X)^\vee=0$ since $K_X$ is anti-ample. The result is immediate. 
\end{proof}

\begin{lemma}[{\cite[Lemma 5.5]{Hac01}}] \label{Pic}
Let $X$ be a surface with log canonical and $\Q$-Gorenstein smoothable singularities with $-K_X$ ample, and let $\X/\Delta$ be a deformation of $X$ over the germ of a  smooth curve. Then the restriction map
\begin{equation*}
\Pic \X \rightarrow \Pic X
\end{equation*}
is an isomorphism.
\end{lemma}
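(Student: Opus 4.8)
\section*{Proof proposal}

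The plan is to deduce the isomorphism from two cohomological vanishings on the central fibre, namely $H^1(\OO_X)=0$ and $H^2(\OO_X)=0$, together with an infinitesimal-to-formal-to-global comparison of Picard groups. Since $\Pic$ is computed by $H^1$ of the sheaf of units, the entire argument is controlled by the additive cohomology of $\OO_X$, and the two vanishings are exactly what force the restriction map to be both injective and surjective. I would first record these vanishings and then run the thickening argument.

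For the vanishings, I would argue as follows. Because $X$ is reduced, connected and proper, $h^0(\OO_X)=1$. By Serre duality for the Cohen--Macaulay surface $X$, $H^2(\OO_X)\cong H^0(\omega_X)^\vee$ with $\omega_X=\OO_X(K_X)$; a nonzero section of $\omega_X$ would produce an effective Weil divisor $D\sim K_X$, and pairing with the ample $\Q$-Cartier divisor $-K_X$ would give $0\le(-K_X)\cdot D=-(-K_X)^2<0$, a contradiction, so $H^2(\OO_X)=0$. Finally, since the singularities of $X$ are $\Q$-Gorenstein smoothable and $-K_X$ is big, Theorem~\ref{lc_big} furnishes a $\Q$-Gorenstein smoothing of $X$ whose general fibre is a smooth del Pezzo surface; as $\chi(\OO)$ is constant in flat families, $\chi(\OO_X)=1$, whence $h^1(\OO_X)=h^0(\OO_X)+h^2(\OO_X)-\chi(\OO_X)=0$. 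I note that this route avoids Kawamata--Viehweg vanishing, which matters because $X$ is only assumed log canonical (it may, for example, carry a simple elliptic singularity as in the locus $\ZZ_3$).

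With the vanishings in hand, I would pass to the completion of the base, so that $\Delta=\spec R$ with $R$ a complete discrete valuation ring with uniformizer $t$, and consider the infinitesimal neighbourhoods $X_n=V(t^{n+1})\subset\X$. Flatness of $\X/\Delta$ makes $t$ a nonzerodivisor, so the ideal of $X_n$ in $X_{n+1}$ is isomorphic to $\OO_X$, giving exact sequences of sheaves
\begin{equation*}
1\to\OO_X\to\OO_{X_{n+1}}^{*}\to\OO_{X_n}^{*}\to1,\qquad f\mapsto 1+t^{n+1}f.
\end{equation*}
The associated cohomology sequence $H^1(\OO_X)\to\Pic X_{n+1}\to\Pic X_n\to H^2(\OO_X)$ together with the two vanishings shows that $\Pic X_{n+1}\to\Pic X_n$ is an isomorphism for every $n$; inductively $\Pic X_n\cong\Pic X$, so the formal Picard group $\Pic\widehat{\X}=\varprojlim_n\Pic X_n$ maps isomorphically to $\Pic X$ by restriction. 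Applying Grothendieck's existence theorem to the proper morphism $\X\to\Delta$ over the complete local base identifies $\Pic\X$ with $\Pic\widehat{\X}$, and composing yields that restriction $\Pic\X\to\Pic X$ is an isomorphism. The main obstacle is the vanishing $H^1(\OO_X)=0$ in the merely log canonical setting, which I handle through deformation-invariance of $\chi$ rather than a vanishing theorem; once this and $H^2(\OO_X)=0$ are in place, the remaining steps are formal.
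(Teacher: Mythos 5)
Your proof is correct, and it isolates exactly the same two cohomological inputs as the paper --- $H^2(\OO_X)=0$ from Serre duality and ampleness of $-K_X$, and $H^1(\OO_X)=0$ from Theorem~\ref{lc_big} together with deformation-invariance of $\chi(\OO)$ --- but the globalization step is genuinely different. The paper (following Hacking) compares $\Pic\X\rightarrow\Pic X$ with the restriction $H^2(\X,\Z)\rightarrow H^2(X,\Z)$, which is an isomorphism because $X$ is a homotopy retract of $\X$, and then applies the exponential sequence on both $X$ and $\X$ (the latter after killing $H^1(\OO_\X)$ and $H^2(\OO_\X)$ by cohomology and base change over the affine base). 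You instead run the purely algebraic argument: the truncated exponential $1\to\OO_X\to\OO_{X_{n+1}}^{*}\to\OO_{X_n}^{*}\to 1$ on infinitesimal thickenings, the two vanishings to kill the obstruction and the ambiguity in lifting line bundles, and Grothendieck existence to algebraize. Your route avoids the analytic topology entirely and so works over a complete local base in any setting where the vanishings hold, at the cost of invoking formal-scheme machinery; the paper's route is shorter in the complex-analytic setting and keeps the topological lattice $H^2(X,\Z)$ in view, which is convenient elsewhere in the $K3$ story. Two small points to tidy, neither affecting correctness: (i) since you pass to the completion of the base, either take $\Delta$ complete from the outset or add a line descending the conclusion from the completed family back to $\X$ (for the injectivity actually used later, this follows from faithful flatness of completion applied to $f_*L$); (ii) in the Grothendieck existence step, note that a coherent algebraization of a formal line bundle is automatically invertible, because the locus where it fails to be locally free is closed and, by properness, would have to meet the central fiber.
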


The proof mimics a portion of the proof of the proposition cited. We include it for the reader's convenience.

\begin{proof}
We have a commutative diagram
\begin{equation*}
\begin{tikzcd}
\Pic \X \arrow[r] \arrow[d]
	& \Pic X \arrow[d] \\
H^2(\X, \Z) \arrow[r] 
	& H^2(X, \Z)	
\end{tikzcd}
\end{equation*}

The restriction map $H^2(\X, \Z) \rightarrow H^2(X, \Z)$ is an isomorphism because $X$ is a homotopy retract of $\X$. The map $\Pic X \rightarrow H^2(X, \Z)$ fits into the long exact sequence in cohomology
\begin{equation*}
\cdots \rightarrow H^1(\OO_X) \rightarrow \Pic X \rightarrow H^2(X, \Z) \rightarrow H^2(\OO_X) \rightarrow \cdots
\end{equation*}
associated to the exponential sequence. Using Serre duality and the fact that $-K_X$ is ample, we see that $H^2(\OO_X)=0$ as in the proof of Lemma~\ref{lift}. 

By Theorem~\ref{lc_big}, $X$ admits a one-parameter smoothing over the germ of a smooth curve to a del Pezzo surface $Y$. Since $\chi(\OO_Y)=1$, we must have $H^1(\OO_X)=0$. Hence the map $\Pic X \rightarrow H^2(X, \Z)$ is an isomorphism. 

Since $H^1(\OO_X)=H^2(\OO_X)=0$, by cohomology and base change, $R^1f_\ast \OO_\X=R^2f_\ast \OO_\X=0$. Since $\Delta$ is affine, $H^1(\OO_\X)=H^2(\OO_\X)=0$ (see {\cite[Theorem III.3.7, Exercise III.8.1, Theorem III.12.11]{Har77}}). By considering the exponential sequence as before, we see that the map $\Pic \X \rightarrow H^2(\X, \Z)$ is an isomorphism. Therefore, the restriction map $\Pic \X \rightarrow \Pic X$ is also an isomorphism.

\end{proof}

We now present the main theorem of this section. As in the previous lemma, let $\Delta$ denote the germ of a smooth curve.

\begin{theorem} \label{smooth}
Let $(X,D)$ be an slc stable pair such that $X$ has class $T$ singularities and $D$ is Cartier. Then the following hold:
\begin{enumerate} 
\item $(X,D)$ is smoothable.
\item The generic fiber of any $\Q$-Gorenstein deformation of $(X,D)$ over $\Delta$ is smoothable.
\item Any $\Q$-Gorenstein deformation of the singularities of $X$ over $\Delta$ can be realized on a stable pair.
\end{enumerate}
\end{theorem}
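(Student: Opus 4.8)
Since a class $T$ singularity is log terminal and $\Q$-Gorenstein smoothable (Proposition~\ref{Classt} and the surrounding remarks), $X$ has log terminal, hence log canonical, singularities, and because $(X,D)$ is a stable pair the divisor $-K_X$ is ample, in particular big. The plan is to run all three parts off of the same two engines: Theorem~\ref{lc_big} (no local-to-global obstructions when $-K_X$ is big) to globalize surface-level $\Q$-Gorenstein deformations, and Lemma~\ref{Pic} together with the vanishing from (the proof of) Lemma~\ref{lift} to carry the Cartier divisor $D$ along any such deformation.

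\textbf{Part (1).} First I would apply Theorem~\ref{lc_big} to produce a $\Q$-Gorenstein one-parameter smoothing $\pi\colon \X \to \Delta$ of the \emph{surface} $X$; since $-K_X$ is ample the generic fiber $\X_\eta$ is a smooth del Pezzo surface, which is all that ``smoothable'' requires of the surface. Next I would extend the curve. As $X$ has log terminal, $\Q$-Gorenstein smoothable singularities with $-K_X$ ample, Lemma~\ref{Pic} gives an isomorphism $\Pic\,\X \xrightarrow{\sim} \Pic\,X$, so the line bundle $\OO_X(D)$ extends to a line bundle $\mathcal{L}$ on $\X$ with $\mathcal{L}|_X \cong \OO_X(D)$. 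To extend the defining section of $D$, I would invoke cohomology and base change: in the proof of Lemma~\ref{lift} we saw $H^1(X,\OO_X(D))=0$, so $\pi_\ast\mathcal{L}$ is locally free near $0$ with formation commuting with base change, whence the restriction $H^0(\X,\mathcal{L}) \to H^0(X,\OO_X(D))$ is surjective. Lifting the section cutting out $D$ gives a Cartier divisor $\D \subset \X$ with $\D|_X = D$. Because $\D$ is Cartier and $\X/\Delta$ is already $\Q$-Gorenstein, the divisorial pullback of $\D$ to the canonical covering stays Cartier, so the index condition holds and $(\X,\D)/\Delta$ is a $\Q$-Gorenstein deformation of the pair in the sense of Definition~\ref{qg} (vacuously so for $(m,n)=(1,2)$, per Remark~\ref{index}); its generic surface fiber is smooth, proving $(X,D)$ is smoothable.

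\textbf{Parts (2) and (3).} For (2), I would take an arbitrary $\Q$-Gorenstein deformation $(\X,\D)/\Delta$ and observe that the generic fiber inherits the same hypotheses as $(X,D)$: its singularities are $\Q$-Gorenstein deformations of class $T$ singularities, hence class $T$; and since the reflexive sheaf $\OO_\X(\D)$ restricts on $X$ to the invertible $\OO_X(D)$, it is invertible in a neighborhood of $X$, hence (over the germ $\Delta$) on $\X_\eta$, so $\D_\eta$ is Cartier. Thus $(\X_\eta, \D_\eta)$ is again an slc stable pair with class $T$ singularities and Cartier divisor, and part (1) applied to it yields smoothability. For (3), given a $\Q$-Gorenstein deformation of the singularities of $X$ over $\Delta$, the vanishing of the local-to-global obstruction in Theorem~\ref{lc_big} lets me realize it by a global $\Q$-Gorenstein deformation $\X/\Delta$ of the surface inducing exactly the prescribed local one; I then extend $D$ to a Cartier $\D$ exactly as in part (1). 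It remains to check the fibers are stable pairs: $\chi(\OO)=1$ is deformation-invariant, ampleness of $K+(m/n+\epsilon)D$ is open, the class $T$ condition keeps the surface log terminal with $D$ Cartier avoiding the singularities (so slc holds), and the relation $nK+mD\sim 0$ propagates because the line bundle $nK_\X+m\D$ is trivial on $X$ and hence trivial on $\X$ by the $\Pic$ isomorphism of Lemma~\ref{Pic}, so trivial on every fiber.

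\textbf{Main obstacle.} The routine inputs (globalizing via Theorem~\ref{lc_big}, the $\Pic$ isomorphism, and the $H^1$-vanishing) are in hand; the delicate point is reconciling the surface-level $\Q$-Gorenstein structure with the Cartier divisor so that $(\X,\D)$ is genuinely a $\Q$-Gorenstein family of \emph{stable pairs}. Concretely, I expect the care to lie in verifying that the extended $\D$ meets the $\Q$-Gorenstein and stability conditions fiberwise in part (3)—in particular that $nK+mD\sim 0$ and slc persist across the family, and that in part (3) the divisor extension is compatible with the \emph{prescribed} local deformation of the singularities rather than merely some deformation. These are where I would spend the most effort making the argument airtight.
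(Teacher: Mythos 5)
Your proposal is correct and follows essentially the same route as the paper: smooth the surface via Theorem~\ref{lc_big}, carry the divisor along, and then verify stability of the generic fiber using the Picard isomorphism of Lemma~\ref{Pic}, restriction of $nK+mD\sim 0$, constancy of $\chi(\OO)$, and openness of ampleness. The only difference is that where the paper lifts the divisor by citing \cite[Corollary 3.2]{Has99} together with the vanishing $H^1(\OO_D(D))=0$ from Lemma~\ref{lift}, you lift it by hand---extending the line bundle via Lemma~\ref{Pic} and lifting its defining section using $H^1(\OO_X(D))=0$ and cohomology and base change---which is an equally valid, essentially equivalent argument.
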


\begin{proof}
By Theorem~\ref{lc_big}, a $\Q$-Gorenstein smoothing of the singularities of $X$ over $\Delta$ lifts to a $\Q$-Gorenstein smoothing $\X/\Delta$ of $X$. By Lemma~\ref{lift}, $H^1(\OO_D(D))=0$, hence by {\cite[Corollary 3.2]{Has99}}, we can lift this family of surfaces in turn to a family of slc pairs $(\X, \D)/\Delta$ satisfying all the conditions of a $\Q$-Gorenstein family except (a priori) that the generic fiber is a stable pair. 

Since $nK_X +mD \sim 0$, by Proposition~\ref{Pic}, we have the relation $nK_\X+m\D \sim 0$. Therefore, $nK_{\X_\eta}+m\D_\eta \sim 0$ by restriction. We also note that $\chi(\OO_{\X_\eta})=1$, since $\chi(\OO_X)=1$. Now, fix $\epsilon$ such that $K_X+(m/n+\epsilon)D$ is ample. Passing to a sufficiently high multiple $N$ such that $N(K_\X+(m/n+\epsilon) \D)$ is Cartier and restricting to the generic fiber shows that $K_{\X_\eta}+(m/n+\epsilon)\D_\eta$ is ample as well. This concludes the proof of $(1)$.

Since the hypotheses of the theorem are preserved under any $\Q$-Gorenstein deformation over $\Delta$, $(2)$ is immediate.

For $(3)$, lift a $\Q$-Gorenstein deformation of the singularities of $X$ to a $\Q$-Gorenstein deformation of slc pairs as above. Repeating the argument in the proof of $(1)$ shows that the generic fiber is also a stable pair.\end{proof}

\begin{remark}
The first claim in the theorem can be strengthened slightly: We may assume the singularities of $X$ are log canonical and $\Q$-Gorenstein-smoothable, if we also require that $H^1(\OO_D(D))=0$. 
\end{remark}

\section{Proof of main result: resolving $\varphi$}
\numberwithin{theorem}{section}
\counterwithin{theorem}{subsection}
\numberwithin{equation}{section}
\counterwithin{equation}{subsection}
This section will be devoted to proving Theorem~\ref{mainthm}. Let $\mathfrak{P}$ denote the moduli space in Definition~\ref{aux_stacks}. Again, for brevity, we will simply use the terminology ``stable pairs" (omitting ``of type $(m,n)$"). 

Using the stratifications in Section $2$, for each marked smooth plane quintic or trigonal curve $(D, E)$, we exhibit a stable pair $(X,D)$. For each bielliptic curve $D$, we exhibit a stable pair $(X, D)$. Stability of these pairs will be addressed in Proposition~\ref{stable_prop}. In this section, we also explain how to address the hyperelliptic curves. Throughout, we will abuse notation and write $D$ for both the curve that we start with and its image in any birational model of the surface into which $D$ naturally embeds. 

\subsection{Marked plane quintics}\label{pl_q} 

For a given marked plane quintic curve $(D, E)$ of type $(a_1, \dots, a_5)$, choose a line $\ell$ in $\PP^2$ such that 
\begin{equation*}
E=\ell|_D=\displaystyle \sum_i a_ip_i.
\end{equation*}
Separate $D$ from $\ell$ by blowing up, and contract the strict transform of $\ell$ and any exceptional curves of self-intersection strictly less than $-1$. We obtain a surface $X$ with singularity type
\begin{equation*}
\frac{1}{4}(1,1) \displaystyle \oplus \bigoplus_{a_i>1} A_{a_i-1}.
\end{equation*}
We have constructed the desired pair $(X, D)$.  

\subsection{Marked trigonal curves, type $(0; b_1, b_2, b_3, b_4)$} \label{M=0;4} 

Given a marked trigonal curve of genus six and Maroni invariant $0$ denoted $(D, E)$, embed $D$ in $\PP^1 \times \PP^1$. The curve $D$ has class $3e+4f$. Choose a particular ruling $e_0 \in |e|$ such that $E=e_0|_D$ on $D$. Separate $D$ from $e_0$ by blowing up. Contracting the strict transform of $e_0$ and all exceptional curves of self-intersection strictly less than $-1$ yields a surface $X$ with singularity type 
\begin{equation*}
\frac{1}{4}(1,1) \displaystyle \oplus \bigoplus_{b_i>1} A_{b_i-1}.
\end{equation*}
We have constructed the desired pair $(X, D)$.

\subsection{Marked trigonal curves, type $(2; [a_1], a_2, a_3, a_4)$}\label{M=2} For a given pair $(D, E)$ of this type, embed $D$ in $\F_2$. Let $e$ denote the negative section and choose $f$ such that $E=(e+f)|_D$. Separate $D$ from $e \cup f$ by blowing up. If necessary (this will depend on whether $a_1=1$ or $a_1>1$), further separate $D$ from the chain of curves connecting the strict transforms of $e$ and $f$ by blowing up. This process yields a chain $C$ of rational curves of self-intersection 
\begin{equation*}
[-3, \underbrace{-2, \dots, -2}_{\text{$a_1-1$}} , -3].
\end{equation*}
Contracting $C$ along with any exceptional curves of self-intersection strictly less than $-1$ produces a surface $X$ with singularity type
\begin{equation*}
\frac{1}{4(a_1+1)}(1, 2a_1+1) \oplus \displaystyle \bigoplus_{ \underset{i\neq 1}{a_i >1}} A_{a_i-1}.
\end{equation*}
The quotient singularity of $X$ is index two class $T$. We have constructed the desired pair $(X, D)$.

\subsection{Bielliptic curves}\label{biell} As noted in Section $2$, such a curve $D$ embeds as a quadric section of an elliptic cone $X$ of degree $5$ in $\PP^5$, hence $D$ necessarily has class $-2K_ X$ (which is ample). Moreover, $H^1(\OO_D(D))=0$ by Serre duality.  Since $X$ is log canonical and $D$ avoids the singularity, $(X,D)$ is a smoothable slc stable pair by Theorem~\ref{smooth}. Note that any smoothing of the elliptic singularity is automatically $\Q$-Gorenstein, since the singularity is Gorenstein. Since $K_X^2=5$, the pair smooths to $(\Sigma_5, C)$ where $C$ is smooth of class $-2K_{\Sigma_5}$, as desired.

\subsection{Hyperelliptic curves}\label{hyper} There is a complete list of ADE-singular plane sextic curves given in \cite[Table 2]{Ya96}. In particular, we can find such a curve with an $A_{13}$ singularity and four nodes in general position. Blow up the four nodes to recover $\Sigma_5$, and let $D$ be the strict transform of the sextic. By construction, $D$ has class $-2K_{\Sigma_5}$. Stable reduction of a curve with an $A_{13}$ singularity yields a smooth genus six hyperelliptic curve. Moreover, every such curve arises in this way (see {\cite[Example 6.2.1]{Has00}}). It follows immediately from Definition~\ref{stable_pair} that the pair $(\Sigma_5, D)$ is stable. By deforming the $A_{13}$ curve in $\Sigma_5$, we obtain a $\mathbb{Q}$-Gorenstein smoothing of this pair to $(\Sigma_5, C)$ where $C$ is smooth of class $-2K_{\Sigma_5}$.

\subsection{Proof of main theorem} We have completed the list of pairs necessary to prove Theorem~\ref{mainthm}. The following sequence of propositions will constitute our proof.

\begin{proposition}\label{stable_prop}
All of the pairs constructed in (\ref{pl_q})~--~(\ref{hyper}) are smoothable stable pairs of type $(1,2)$. Moreover, all of these pairs lie in $\mathfrak{\mfp_0}$.
\end{proposition}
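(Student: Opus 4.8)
The plan is to verify the four conditions of Definition~\ref{stable_pair} for each family of pairs in (\ref{pl_q})~--~(\ref{hyper}), deduce smoothability from Theorem~\ref{smooth} (in the strengthened form noted in the remark following it for the simple elliptic case), and finally read off membership in $\mfp_0$ from the singularity types recorded in the constructions. The key organizing observation is that once we know $D \sim -2K_X$, which is condition (3), the divisor in condition (2) satisfies $K_X + (1/2+\epsilon)D \sim -2\epsilon K_X$, so the ampleness required there is equivalent to the single statement that $-K_X$ is ample; moreover $D = -2K_X$ is automatically Cartier because every non-du~Val singularity occurring is an index-two class $T$ point, so $2K_X$ is Cartier. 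Thus the proposition reduces to four checks: $D \sim -2K_X$, the surface singularities are class $T$ (hence log terminal and $\Q$-Gorenstein smoothable), $-K_X$ is ample, and $\chi(\OO_X)=1$.

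First I would carry out the canonical-class bookkeeping common to the plane quintic and trigonal constructions (\ref{pl_q})~--~(\ref{M=2}). In each case $X$ is obtained from a smooth rational surface $\tilde X$ (a blow-up of $\PP^2$, $\PP^1\times\PP^1$, or $\F_2$) by a birational contraction $\rho\colon \tilde X \to X$ of the strict transform of the auxiliary curve together with finitely many du~Val $(-2)$-chains. A direct intersection computation shows that the strict transform $\tilde D$ coincides with the divisorial pullback $\rho^*(-2K_X)$, whence $D = \rho_*\tilde D \sim -2K_X$; the same computation yields $K_X^2 = 5$, consistent with Definition~\ref{aux_stacks}. Because $D$ and the auxiliary curve meet transversally in the points we blow up, their strict transforms are disjoint, so $D$ avoids $\mathrm{Sing}(X)$ and is smooth; in particular condition (1) of Definition~\ref{stable_pair} and condition (2) of Definition~\ref{stacks} hold. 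The identification of each quotient singularity as class $T$ of index two follows from Proposition~\ref{Classt} (for $\frac14(1,1)$ take $p=2,q=1,d=1$; for $\frac{1}{4(a_1+1)}(1,2a_1+1)$ take $p=2,q=a_1+1,d=1$), so $X$ is log terminal and $\Q$-Gorenstein smoothable, while $\chi(\OO_X) = \chi(\OO_{\tilde X}) = 1$ since quotient singularities are rational and $\tilde X$ is rational. With $-K_X$ ample (see below), $(X,(1/2+\epsilon)D)$ is slc for small $\epsilon$ because $X$ is klt and $D$ is smooth and disjoint from the singular locus; all four conditions of Definition~\ref{stable_pair} then hold, and Theorem~\ref{smooth} gives smoothability.

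The step I expect to be the main obstacle is the ampleness of $-K_X$, which cannot be imported from the smoothing, since Theorem~\ref{smooth} already presupposes stability. I would establish it by the Nakai--Moishezon criterion: $-K_X$ is ample if and only if $(-K_X)^2 > 0$ and $-K_X\cdot C > 0$ for every irreducible curve $C \subset X$. The first holds since $(-K_X)^2 = 5$. For the second, writing $K_{\tilde X} = \rho^*K_X + \sum a_i\Gamma_i$ for the discrepancies $a_i \in (-1,0)$ of the contracted curves $\Gamma_i$, one has $-K_X\cdot C = \rho^*(-K_X)\cdot\tilde C = (-K_{\tilde X} + \sum a_i\Gamma_i)\cdot\tilde C$ for the strict transform $\tilde C$ of $C$, and this must be checked to be strictly positive for every $\tilde C$ not contracted by $\rho$ (the contracted $\Gamma_i$ have intersection zero by construction). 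This verification is genuinely computational and must be done case by case; it is most delicate for the Maroni-invariant-two family (\ref{M=2}), where the contracted configuration is the chain $[-3,-2,\dots,-2,-3]$ producing $\frac{1}{4(a_1+1)}(1,2a_1+1)$ and one must track the full chain of discrepancies.

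Finally I would treat the bielliptic and hyperelliptic cases and collect the $\mfp_0$ statement. For (\ref{biell}), $X$ is an elliptic cone, so its vertex is a simple elliptic (log canonical, Gorenstein, hence $\Q$-Gorenstein smoothable) singularity; since $D$ is a smooth quadric section avoiding the vertex, $D \sim -2K_X$ is ample and $H^1(\OO_D(D)) = 0$ by Serre duality, so the strengthened form of Theorem~\ref{smooth} applies once one notes $\chi(\OO_X) = 1$ (a short Leray computation for the cone). For (\ref{hyper}), $X = \Sigma_5$ is smooth and $D$ carries a single $A_{13}$ singularity; stability is immediate from Definition~\ref{stable_pair}, and deforming $D$ inside $|-2K_{\Sigma_5}|$ to a smooth member furnishes the smoothing. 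In every case the surface singularities are du~Val, index-two class $T$ quotients, or simple elliptic, and $D$ has at worst ADE singularities disjoint from $\mathrm{Sing}(X)$; this is exactly the list defining $\mfp_0$ in Definition~\ref{stacks}, so all the pairs lie in $\mfp_0$.
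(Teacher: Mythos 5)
Your proposal is correct and follows essentially the same route as the paper: the paper likewise establishes $D\sim -2K_X$ and $K_X^2=5$ by pulling back to the minimal resolution and applying the projection formula, verifies ampleness of $-K_X$ by testing $\phi^*(-K_X)$ against curves on the resolution, notes that $(X,D)$ is slc since $X$ is log terminal and $D$ avoids the singular locus, and then invokes Theorem~\ref{smooth}, with the bielliptic and hyperelliptic cases handled exactly as you describe. (The only nitpick is that the discrepancies of the contracted du~Val chains are $0$ rather than lying in $(-1,0)$, which does not affect your positivity check.)
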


\begin{proof}
We outline the general technique for showing that each pair over the trigonal curves and plane quintics lies in $\mathfrak{\mfp_0}$ below. Note that we have already addressed the pairs associated to the bielliptic and hyperelliptic curves in ~(\ref{biell}) and ~(\ref{hyper}).

Fix one of these pairs $(X,D)$ such that $D$ is a smooth plane quintic or trigonal curve. Let $\phi: X' \rightarrow X$ be the minimal resolution. We have seen $X'$ can be realized as a sequence of blow-ups of a smooth surface in which $D$ naturally embeds and whose intersection theory is well understood (see Section $2$). As a result, there is a natural set of generators for $\Pic X'$. We have seen that $X$ has index $2$ class $T$ singularities (and potentially also has type $A$ singularities) and is in particular $\Q$-factorial. We can express $\phi^\ast(-2K_X)$ and $\phi^*(D)$ in terms of these Picard generators, and we determine that they are linearly equivalent. Moreover, $\phi^*(D)$ coincides with $D'$ (the strict transform of $D$), since $D$ avoids the singularities of $X$. By the projection formula, we obtain $D=-2K_X$. This computation also verifies that $-K_X$ is ample; one checks that $\phi^*(-K_X)$ is nef and trivial precisely along curves contracted by $\phi$. We also see that $(K_X)^2=5$. Moreover, since $X$ is log terminal and $D$ avoids singularities, $(X,D)$ is slc. Combining all of this, we see that $(X,D)$ satisfies the hypotheses of Theorem~\ref{smooth}. Thus, $(X,D)$ smooths to $(\Sigma_5, C)$, where $C$ is smooth of class $-2K_{\Sigma_5}$ as desired.
\end{proof}

\begin{Example}[\emph{Marked plane quintics, type $(1,1,1,1,1)$}] \label{pl_q_2}
Let $D$ be a smooth plane quintic and let $\ell$ be a line transverse to $D$. Let $\pi_1: X' \rightarrow \PP^2$ be the blow-up of $\PP^2$ at the $5$ points of intersection of $D$ and $\ell$, and let $\pi_2: X' \rightarrow X$ denote the contraction of $\ell'$ (the strict transform of $\ell$). Let $L$ denote the hyperplane class on $\PP^2$, let $G_i$ be the five $\pi_1$-exceptional divisors, let $D'$ be the strict transform of $D$ under $\pi_1$, and let $D''$ denote its image in $X$. We show that $(X, D'')$ is a stable pair satisfying the hypotheses of Theorem~\ref{smooth} with $K_X^2=5$, hence this pair lies in $\mathfrak{P}$.

We compute
\begin{equation*}
\pi_2^\ast(-2K_X)=-2K_{X'}-\ell'=\pi_1^\ast(5L)-\underset{i=1}{\sum^5} G_i=D'=\pi_2^\ast (D''),
\end{equation*}
hence by the projection formula,
\begin{equation*}
D''=-2K_{X}.
\end{equation*}

To verify that $K_{X}+D''=-K_X$ is ample, we choose
\begin{equation*}
\ell'=\pi_1^\ast L - \underset{i=1}{\sum^5} G_i
\end{equation*}
and the $G_i$ as Picard generators for $X'$. Fix an irreducible curve $C \subset X$; we need to show that this curve is positive against $-K_{X}$. Since $X$ is $\Q$-factorial, we can pull back to the minimal resolution to compute intersection numbers. If $C'$ (the strict transform of $C$ under $\pi_2$) is not $\ell'$ or any of the $G_i$, it is non-negative along each. By non-degeneracy of the intersection pairing on $X'$, in fact $C'$ must be strictly positive along at least one of them. Since we can write $\pi_2^\ast(-K_{X})$ as a positive linear combination of $\ell'$ and the $G_i$, it follows that we only need to check how this pullback intersects each of them. Ampleness of $-K_{X}$ is immediate. 

The pair $(X, D'')$ is slc since $X$ is log terminal (class $T$) and $D''$ avoids singularities. We conclude that $(X, D'')$ is an slc stable pair. Moreover, $K_{X}^2=5$ and the pair satisfies the hypotheses of Theorem~\ref{smooth}. 
\end{Example}

\begin{Example}[\emph{Marked trigonal curves, type $(2; [4])$}]\label{M=2, ex}

Let $D$ be such a curve in $\F_2$, let $e$ denote the negative section, and let $f_p$ denote the distinguished fiber (see Section $2$). Let $\phi_1: X' \rightarrow \F_2$ denote the sequence of blow-ups described in ~(\ref{M=2}), and let $G_i$ be the $\phi_1$-exceptional divisors ($i=1, \dots, 4$). Let $\phi_2: X' \rightarrow X$ be the minimal resolution of $X$. Let $D'$ be the strict transform of $D$ under $\phi_1$, and let $D''$ be its image in $X$. We show that the pair $(X, D'')$ is stable and satisfies the hypotheses of Theorem~\ref{smooth} with $K_X^2=5$, hence this pair lies in $\mathfrak{P}$.

We compute
\begin{equation}\label{4.1}
\phi_1^\ast(K_{\F_2})=K_{X'}-G_1-2G_2-3G_3-4G_4.
\end{equation}
On the other hand,
\begin{equation}\label{4.2}
\phi_1^\ast(K_{\F_2})=\phi_1^\ast(-2e-4f_p)=-2e'-4f_p'-6G_1-10G_2-14G_3-14G_4,
\end{equation}
hence
\begin{equation}\label{4.3}
K_{X'}=-2e'-4f'_p-5G_1-8G_2-11G_3-10G_4.
\end{equation}
In ~(\ref{4.2}), ~(\ref{4.3}), $e'$ and $f'_p$ refer to the strict transforms of $e$ and $f_p$ under $\phi_1$. 

Next, using ~(\ref{4.3}), we see that
\begin{equation}\label{4.4}
\phi_2^\ast(-2K_{X})=3e'+7f_p'+9G_1+15G_2+21G_3+20G_4.
\end{equation}
Also, since $D''$ avoids the singularities of $X$, $\phi_2^\ast (D'')=D'$. We compute
\begin{equation}\label{4.5}
\phi_1^\ast(D)=D'+G_1+2G_2+3G_3+4G_4.
\end{equation}
On the other hand,
\begin{equation}\label{4.6}
\phi_1^\ast(D)=\phi_1^\ast(3e+7f_p)=3e'+7f_p'+10G_1+17G_2+24G_3+24G_4.
\end{equation}
Therefore, by combining ~(\ref{4.4}), ~(\ref{4.5}), and ~(\ref{4.6}),
\begin{equation}\label{4.7}
D'=3e'+7f_p'+9G_1+15G_2+21G_3+20G_4=\phi_2^\ast(-2K_{X}).
\end{equation}
By the projection formula, $D''=-2K_{X}$. 

To verify ampleness of $K_{X}+D''=-K_{X}$, we choose $e', f'_p$ and the $G_i$ as Picard generators for $X'$. An analogous argument to that in Example~\ref{pl_q_2}  implies that we only need to check that $-K_X$ is positive against $G_4$, which it is. We again note that the pair $(X, D'')$ is slc since $X$ is log terminal and $D''$ avoids singularities. Hence this pair is in fact an slc stable pair. Moreover, $K_{X}^2=5$ and the pair satisfies the hypotheses of Theorem~\ref{smooth}.

\end{Example}
\begin{remark}\label{bdry_rmk}
We note that the pairs $(X, D)$ associated to plane quintics (Subsection~\ref{pl_q}) lie in the boundary locus $\ZZ_1$ described in Section $1$ (which is now well-defined as a result of Proposition~\ref{stable_prop}). We remark that it follows from the construction of $(X, D)$ that $\ZZ_1$ is in fact a divisor: The locus of plane quintics in $\mathcal{M}_6$ is of dimension $12$, and the moduli space of $5$ points on $\PP^1$ (the points of intersection between a quintic and a line) is of dimension $2$. We also see from this discussion that the fiber of the forgetting map $j: \mfp_0 \dashrightarrow \overline{\mathcal{M}}_6$ over a smooth plane quintic curve is $2$ dimensional.

Similarly, the pairs $(X, D)$ associated to trigonal curves (Subsection~\ref{M=0;4} and Subsection~\ref{M=2}) lie in the boundary locus $\ZZ_2$ described in Section $1$. We remark that $\ZZ_2$ is in fact a divisor: The trigonal locus in $\mathcal{M}_6$ is of dimension $13$, and the moduli of $4$ points on $\PP^1$ (the points of intersection between a trigonal curve with $M=0$ on $\PP^1 \times \PP^1$ and the appropriate ruling) is of dimension $1$. We also see from this discussion that the fiber of the forgetting map $j: \mfp_0 \dashrightarrow \overline{\mathcal{M}}_6$ over a smooth trigonal curve of genus six is $1$ dimensional.

By definition, the pairs $(X, D)$ in this subsection lie in the boundary locus $\ZZ_3 \subset \mfp_0$. Since the bielliptic locus in $\mathcal{M}_6$ is of dimension $10$ and the bielliptic involution is unique (recall Subsection~\ref{biell_geo}), the locus $\ZZ_3$ is in fact $10$ dimensional as asserted in Section $1$.

\end{remark}

\begin{proposition}
The stack $\mfp_0$ is smooth and Deligne-Mumford.\end{proposition}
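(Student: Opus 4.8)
The plan is to establish the two assertions—Deligne--Mumford and smooth—separately, since they have quite different flavors. The Deligne--Mumford property should be essentially inherited: by Definition~\ref{aux_stacks}, $\mfp_0$ is an open substack of $\mfp$, which is a component of the open and closed substack $\mathfrak{F}_{K^2=5}$ of $\mathfrak{F}$, and $\mathfrak{F}$ is Deligne--Mumford by the cited Theorem~\cite[Theorem 2.5]{DH18}. Since openness and the property of being an open/closed substack are preserved along the chain $\mathfrak{F} \supset \mathfrak{F}_{K^2=5} \supset \mfp \supset \mfp_0$, and the Deligne--Mumford property passes to substacks, this half is formal. I would state it in one or two sentences, citing the relevant facts already recorded in the excerpt, and then devote the bulk of the proof to smoothness.

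For smoothness, the key point is that smoothness of a Deligne--Mumford moduli stack at a point $(X,D)$ is equivalent to the vanishing of the obstruction space for $\Q$-Gorenstein deformations of the pair, i.e.\ to the absence of local-to-global obstructions. The strategy is to reduce, via the structure of the allowed singularities in Definition~\ref{stacks}, to a setting where Theorem~\ref{lc_big} applies. Concretely, for any pair $(X,D) \in \mfp_0$, the surface $X$ has only du Val, index two cyclic quotient (class $T$), and simple elliptic singularities, and $D$ avoids all of these and has at worst ADE singularities. First I would argue that the local $\Q$-Gorenstein deformations of the singularities are unobstructed: du Val and class $T$ singularities have smooth (in fact smoothing) $\Q$-Gorenstein deformation components, and simple elliptic (Gorenstein) singularities deform unobstructedly as well, so the local deformation functors are smooth. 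Then I would invoke Theorem~\ref{lc_big}: since $-K_X$ is ample (hence big) and $X$ has log canonical singularities, there are no local-to-global obstructions, so the deformation space of $X$ is smooth of the expected dimension. The transition from deformations of $X$ to deformations of the pair $(X,D)$ is handled exactly as in the proof of Theorem~\ref{smooth}: because $D$ is Cartier (or because the index condition is vacuous for type $(1,2)$ by Remark~\ref{index}) and $H^1(\OO_D(D))=0$ by Lemma~\ref{lift}, every deformation of $X$ lifts, with no further obstruction, to a deformation of $(X,D)$. Thus the tangent--obstruction theory of $\mfp_0$ at each point has vanishing obstructions, giving smoothness.

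The main obstacle I anticipate is the careful bookkeeping of the deformation theory of the pair as opposed to the surface alone, and in particular making rigorous the claim that no obstructions are introduced when passing from $X$ to $(X,D)$ and when combining the simple elliptic (non-log-terminal) case with the log terminal class $T$ case in a single uniform argument. Theorem~\ref{lc_big} is stated for log canonical surfaces with $-K_X$ big, which comfortably covers simple elliptic singularities, but Lemma~\ref{lift} and Theorem~\ref{smooth} are phrased under the class $T$ hypothesis; I would need to check that the vanishing $H^1(\OO_D(D))=0$ used to lift surface deformations to pair deformations still holds in the simple elliptic case. Here the key is that $D$ avoids the elliptic singularity, so $D$ lies in the Gorenstein locus and $\OO_D(D)$ is a line bundle of positive degree on the ADE curve $D$ (as $D \sim -2K_X$ is ample), whence $H^1(\OO_D(D))=0$ by the same Serre-duality argument as in Lemma~\ref{lift} together with Kodaira vanishing for $H^1(\OO_X(D))$. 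Once this vanishing is secured uniformly, the lifting argument from Theorem~\ref{smooth} applies verbatim and the obstruction space vanishes.

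Finally, I would remark that the expected tangent space dimension matches the $15$ of the generic fiber, confirming that $\mfp_0$ is smooth of the correct dimension and consistent with the dimension counts of the boundary loci $\ZZ_1$, $\ZZ_2$, $\ZZ_3$ recorded in Remark~\ref{bdry_rmk}, though this dimension count is a consistency check rather than part of the smoothness proof proper.
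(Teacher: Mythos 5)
Your proof follows essentially the same route as the paper: the Deligne--Mumford property is inherited from $\mathfrak{F}$, and smoothness is obtained by combining unobstructedness of the local $\Q$-Gorenstein deformations of the allowed singularities (du Val, class $T$, simple elliptic, the last being automatically $\Q$-Gorenstein since it is Gorenstein) with the absence of local-to-global obstructions from Theorem~\ref{lc_big}, and then passing to deformations of the pair. The only difference is in the last step: the paper delegates the passage from deformations of $X$ to deformations of $(X,D)$ to \cite[Proposition 3.3]{Has99} (noting that $D$ avoids the singularities of $X$), whereas you re-derive the lifting via the vanishing $H^1(\OO_D(D))=0$ of Lemma~\ref{lift}, correctly flagging that the log terminal hypothesis there fails at simple elliptic points and must be replaced by a direct Serre-duality argument on the Gorenstein curve $D$.
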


\begin{proof}
Since $\mathfrak{F}$ is Deligne-Mumford, so is $\mfp_0$. 

The $\mathbb{Q}$-Gorenstein deformation space of any singularity allowed on a surface in $\mfp_0$ is smooth, since the possible singularities are du Val, cyclic quotient, or simple elliptic of degree $5$. The smoothness of the deformation space of this simple elliptic singularity is proven in \cite[Section 9.2(b)]{Pi74}. Every deformation of this elliptic singularity is $\mathbb{Q}$-Gorenstein since this singularity is Gorenstein. There are no local-to-global obstructions for deformations of any of the surfaces in $\mf$ by Theorem~\ref{lc_big}. By {\cite[Proposition 3.3]{Has99}}, the $\mathbb{Q}$-Gorenstein deformation space of any pair in $\mfp^\text{sm}_0$ is smooth.

Note that a pair $(X, D)$ in $\mfp_0$ where $D$ has ADE singularities is not slc, but the conclusion of {\cite[Proposition 3.3]{Has99}} still holds since we require that $D$ avoids the singularities of $X$. Hence the deformation space of such a pair is smooth. We conclude that $\mfp_0$ is smooth.

\end{proof}

\begin{proposition} \label{K3}
For any pair $(X, D)$ in $\mfp_0$, the double cover of $X$ branched along $D$ is a $K3$ surface with Gorenstein slc singularities.
\end{proposition}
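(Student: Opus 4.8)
The plan is to analyze the double cover $Y \to X$ branched along $D$ locally, case by case, over the singular points of $X$ and the points where $D$ is singular, and then to verify globally that $Y$ has trivial canonical class and the expected cohomology of a K3 surface. The key observation is that since $D \sim -2K_X$ (so that $D \in |-2K_X|$), a double cover branched along $D$ carries a natural square-root line bundle $L$ with $L^{\otimes 2} = \OO_X(D)$ and $L = \OO_X(-K_X)$; by the standard double cover formula, $K_Y = f^\ast(K_X + \tfrac{1}{2}D) = f^\ast(K_X + (-K_X)) = \OO_Y$, so $Y$ is (at worst numerically) Calabi-Yau away from the singular locus. Combined with the local computations showing the singularities of $Y$ are canonical, this will give $K_Y \sim 0$ and hence that $Y$ is a K3 surface (possibly degenerate) as claimed.

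First I would stratify the analysis according to the three types of points where something can go wrong, using the explicit description of pairs in $\mfp_0$ from Definition~\ref{stacks} and the boundary loci $\ZZ_1, \ZZ_2, \ZZ_3$. Over a smooth point of $X$ away from $D$, the cover is \'etale-locally trivial. Over a smooth point of $X$ where $D$ has an ADE singularity, the branch divisor is a plane curve singularity of type ADE, and the double cover of $\C^2$ branched along an $ADE$ plane curve produces exactly the corresponding ADE (du Val) surface singularity on $Y$ — these are canonical, hence Gorenstein slc. This is the classical correspondence and requires only citing the local equations. The interesting cases are the singular points of $X$, which $D$ avoids by condition $(2)$ of Definition~\ref{stacks}; there the cover is \'etale over the singular point, so I must compute the branched double cover of each allowed surface germ with trivial branching near that germ.

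The main obstacle, and the heart of the argument, is the local computation at the non-du-Val singularities of $X$: the index-two cyclic quotient (class $T$) singularities of type $\frac{1}{4(a_1+1)}(1,2a_1+1)$ appearing on $\ZZ_1$ and $\ZZ_2$ (in particular the $\frac14(1,1)$ singularity), and the simple elliptic (degree $5$ cone) singularity on $\ZZ_3$. Here I would pass to the index-one (canonical) cover $Z \to X$ of Definition~\ref{can_cover}: since $K_X$ becomes Cartier on $Z$ and $D$ is Cartier (pulling back to a Cartier divisor $D_Z$ avoiding the singular preimage), the double cover of $Z$ branched along $D_Z$ is Gorenstein, and the $\mu_N$-action descends the cover to $Y$. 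For the $\frac14(1,1)$ case one checks directly that the branched cover of the $A_1$-type index-one cover, quotiented by $\mu_4$, yields a Gorenstein slc (in fact canonical or simple-elliptic) germ; for the simple elliptic singularity, since it is already Gorenstein the cover is Gorenstein and one verifies the resulting germ is again simple elliptic (slc of Gorenstein type), matching the ``insignificant limit singularities'' of \cite{Sh79}, \cite{Sh80} mentioned in the introduction. I expect the $\frac{1}{4(a_1+1)}(1,2a_1+1)$ germs to be the most delicate, requiring the explicit $\mu_p$-action \eqref{can_action} on the $A_{pq-1}$ equation \eqref{Apq} to identify the quotient of the branched cover.

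Finally, having established that $Y$ has only Gorenstein slc singularities and $K_Y \sim 0$ via the double cover canonical bundle formula together with $D \sim -2K_X$ (Proposition~\ref{stable_prop}), I would confirm the K3 numerics: a Leray/Hurwitz computation gives $\chi(\OO_Y) = \chi(\OO_X) + \chi(\OO_X(K_X)) = 1 + 0 = 2$ (using $\chi(\OO_X)=1$ from Definition~\ref{stable_pair}$(4)$ and $H^i(\OO_X(K_X))$ vanishing by Serre duality and ampleness of $-K_X$), which is the correct holomorphic Euler characteristic for a (degenerate) K3 surface. This certifies $Y$ as a (degeneration of a) K3 surface with Gorenstein slc singularities, completing the proof.
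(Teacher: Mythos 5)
Your proposal follows essentially the same route as the paper: realize the double cover as $\Spec_X(\OO_X\oplus\OO_X(K_X))$, analyze its singularities locally, deduce $\omega_Y\cong\OO_Y$ from $D\sim-2K_X$ and the double-cover/adjunction formula, and finish with a cohomology computation. The paper compresses the local analysis into a single sentence, so your case-by-case discussion is the real content, and your instinct to bring in the index-one cover at the class $T$ points is the right one. Two corrections are needed, however.

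First, the cover is \emph{not} \'etale over the index-two singular points of $X$, even though $D$ avoids them. Near such a point $P$ the defining section of $D$ trivializes $\OO_X(-2K_X)$, so $\Spec_X(\OO_X\oplus\OO_X(K_X))$ restricted to a neighborhood of $P$ \emph{is} the canonical cover of Definition~\ref{can_cover} with $N=2$: a single point of $Y$ lies over $P$, and the germ there is the $A_{pq-1}$ (here $p=2$, so $A_{2q-1}$) index-one cover from Proposition~\ref{Classt} --- du Val, hence Gorenstein canonical, matching Table~\ref{table:pairs}. Had the cover genuinely been \'etale there, $Y$ would inherit two copies of the non-Gorenstein germ $\frac{1}{4(a_1+1)}(1,2a_1+1)$ and the proposition would be false. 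Your later sentences show you intend the canonical-cover computation anyway, but the route you sketch (double cover of $Z$ branched along $D_Z$, then quotient by $\mu_N$) is an unnecessary detour: locally near $P$, the map $Y\to X$ already \emph{is} $Z\to X$, and no further quotient is involved. At the simple elliptic point, by contrast, $K_X$ is Cartier, so there the cover really is \'etale and $Y$ acquires simple elliptic (Gorenstein slc) germs, as you say.

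Second, the numerics: $H^2(X,\OO_X(K_X))\cong H^0(X,\OO_X)^\vee\neq 0$ by Serre duality, so $\chi(\OO_X(K_X))=1$, not $0$; your ``$1+0=2$'' betrays the slip. The correct statement is $\chi(\OO_Y)=\chi(\OO_X)+\chi(\OO_X(K_X))=1+1=2$. The paper sidesteps this entirely by computing $h^1(\OO_Y)=h^1(\OO_X)+h^1(\OO_X(K_X))=0$, which together with the triviality of $\omega_Y$ is all that is needed to certify $Y$ as a (degenerate) $K3$ surface.
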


\begin{proof}
Since $D=-2K_X$, the double cover of $X$ branched along $D$ is, by definition,
\begin{equation*}
X^{(2)}=\Spec_X(\OO_X \oplus \OO_X(K_X)), 
\end{equation*}
where the $\OO_X$-algebra structure is determined by multiplication by $D$. Let $\pi: X^{(2)} \rightarrow X$ be the natural morphism. 

By definition of $\mfp_0$, the surface $X^{(2)}$ has only combinations of du Val and simple elliptic singularities (including ``empty" combinations).

For any pair $(X, D)$ in $\mfp_0$, by adjunction and the fact that $D \sim -2K_X$, the line bundle $\omega_{X^{(2)}}$ is trivial. Moreover, 
\begin{equation*}
h^1(X^{(2)}, \OO_{X^{(2)}})=h^1(X,\OO_X) + h^1(X, \OO_X(K_X))=0. 
\end{equation*}
Thus, $X^{(2)}$ is a $K3$ surface with Gorenstein slc singularities as claimed.

\end{proof}

\begin{remark} \label{notable}
We comment on some notable aspects of the $K3$ surfaces associated to the pairs $(X, D)$ containing plane quintic and trigonal curves. For a generic pair $(X, D)$ associated to plane quintics in Subsection~\ref{pl_q} (the general member of $\ZZ_1$), the double cover of $X$ branched along $D$ is a $K3$ surface with an $A_1$ singularity. This $K3$ surface corresponds to the generic point of the component of the discriminant divisor $\mathcal{H}_2$ described by Artebani and Kond\=o. Moreover, the lattice-polarization in this case is isomorphic to $U(2) \oplus D_4$ (see \cite[Section $3$]{AK11}), which is in particular of rank $6$. 

This $K3$ surface can also be constructed by taking the minimal resolution of the double cover of $\PP^2$ branched along the union of $D$ and $\ell$ and contracting the strict transform of $\ell$ (a $(-2)$-curve). We note that constructing periods for pairs $(D, \ell)$ via such double covers has already been considered in full detail in \cite{Laz09} and is also mentioned in \cite{AK11}.

For a generic pair $(X, D)$ associated to trigonal curves in Subsection~\ref{M=0;4} (the general member of $\ZZ_2$), the double cover of $X$ branched along $D$ is a $K3$ surface with an $A_1$ singularity. This $K3$ surface is the generic point of the component of the discriminant divisor $\mathcal{H}_3$ described by Artebani and Kond\=o. Moreover, the lattice polarization in this case is isomorphic to $U \oplus A_1^{\oplus 4}$ (see \cite[Section 3]{AK11}), which is in particular of rank $6$.

A complete description of the singularities appearing on the $K3$ surfaces associated to the pairs constructed in this paper appears in Table~\ref{table:pairs}.
\end{remark}

\begin{proposition}\label{period}
There is a period map 
\begin{equation*}
\tilde \varphi: \mfp_0 \rightarrow (D/\Gamma)^\ast.
\end{equation*}
\end{proposition}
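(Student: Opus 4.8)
The plan is to construct $\tilde\varphi$ as a composition of the relative double cover construction of Proposition~\ref{K3} with the extended period map for $K3$ surfaces. First I would note that by Proposition~\ref{K3}, every pair $(X,D)$ in $\mfp_0$ gives rise to a $K3$ surface $X^{(2)}$ with Gorenstein slc singularities, and that this construction is functorial: given a $\Q$-Gorenstein family $(\X,\D)/T$ of pairs in $\mfp_0$, one forms the relative double cover $\X^{(2)}=\Spec_\X(\OO_\X \oplus \OO_\X(K_{\X/T}))$ using the relative canonical sheaf (which commutes with base change on the locus where $\omega_\pi$ is appropriately reflexive, by condition~(1) of Definition~\ref{qg1}) and the section cutting out $\D \sim -2K_{\X/T}$. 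This yields a family of slc $K3$ surfaces over $T$, giving a morphism from $\mfp_0$ to a moduli stack of (degenerations of) lattice-polarized $K3$ surfaces. The lattice polarization comes from the rank-$5$ sublattice of $\Pic X^{(2)}$ generated by the pullback of $\Pic X$, which varies in a local system over the locus of smooth pairs and extends by the theory in \cite{Dol96} and \cite{AK11}.

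Next I would invoke the fact, already flagged in the introduction, that the period map for $K3$ surfaces extends over degenerations with \emph{insignificant limit singularities} in the sense of \cite{Sh79}, \cite{Sh80}, which in dimension two are exactly the Gorenstein slc singularities. Since Proposition~\ref{K3} guarantees all the branched double covers have precisely this type of singularity (combinations of du Val and simple elliptic), the period point of each $X^{(2)}$ is well-defined in the Satake-Baily-Borel compactification $(D/\Gamma)^\ast$: the du Val singularities do not affect the (minimal resolution) period at all, and the simple elliptic degenerations land on the cusp boundary of $(D/\Gamma)^\ast$. Concretely, over the open locus $\mfp_0^\text{sm}$ the periods land in the interior $D/\Gamma$ via the usual Hodge-theoretic construction, and I would extend continuously over the boundary loci $\ZZ_1,\ZZ_2,\ZZ_3$ using the semistable-reduction/limit-period machinery of Shah.

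The map so constructed descends to a morphism of stacks because the period is manifestly invariant under the automorphisms of each pair (automorphisms of $(X,D)$ induce automorphisms of $X^{(2)}$ commuting with the involution, hence act trivially on the arithmetic quotient), and it is a morphism rather than merely a rational map precisely because of the insignificance of the limit singularities. The main obstacle I expect is verifying the extension across the boundary is a genuine \emph{morphism} (not just a set-theoretic assignment): one must check that the period varies holomorphically in families degenerating to the simple elliptic locus $\ZZ_3$, which requires identifying the limiting mixed Hodge structure and matching it to the correct cusp of $(D/\Gamma)^\ast$. This is handled by Shah's theorem, but care is needed to confirm the hypotheses apply uniformly across all three boundary components and that the global monodromy is compatible with the lattice $\Gamma$ of \cite{AK11}.
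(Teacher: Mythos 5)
Your proposal follows essentially the same route as the paper: form the branched double cover of the tautological family to get (degenerations of) $K3$ surfaces, observe by Proposition~\ref{K3} that all the resulting singularities are Gorenstein slc, i.e.\ insignificant limit singularities in the sense of Shah, and use this to extend the period map over the simple elliptic locus into the boundary of $(D/\Gamma)^\ast$. The only cosmetic difference is that the paper settles your flagged concern about obtaining a genuine morphism by combining the smoothing-independence of the one-parameter limit (via \cite{Sh79} and the discussion in \cite{LO16}) with the smoothness of $\mfp_0$, rather than by analyzing limiting mixed Hodge structures directly.
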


\begin{proof}
Consider the tautological family $\mathfrak{W} \rightarrow \mfp_0$. Taking the double cover of $\mathfrak{W}$ branched along the marked curves yields a family whose fibers parametrize $K3$ surfaces, except over pairs with elliptic singularities. Hence we have a rational period map
\begin{equation*}
\tilde \varphi : \mfp_0 \dashrightarrow (D/\Gamma)^\ast
\end{equation*}
defined away from the elliptic cone pairs. Given a smoothing of such a pair over a germ of a smooth curve, this period map uniquely extends over the closed point. Since the double cover of any pair with elliptic singularities (in fact any pair in $\mfp_0$ by Proposition~\ref{K3}) has insignificant limit singularities (see \cite[Theorem 1]{Sh79}), this extension in fact does not depend on the smoothing. See, for example, the discussion in {\cite[Section 3.3]{LO16}}. Since $\mfp_0$ is smooth, this rational period map extends to a morphism
\begin{equation*}
\tilde \varphi: \mfp_0 \rightarrow (D/\Gamma)^\ast
\end{equation*}
as claimed. The image of pairs with elliptic singularities (including the elliptic cones of Subsection~\ref{biell}) lies in the boundary of $(D/\Gamma)^\ast$.
\end{proof}

\begin{remark}
We see that, as expected, $\tilde \varphi$ can have positive dimensional fibers: We know that $\ZZ_3$ is $10$ dimensional in $\mfp_0$ (recall Remark~\ref{bdry_rmk}), and by \cite[Remark $4.7$]{AK11}, bielliptic curves are mapped to a $1$ dimensional boundary component of $(D/\Gamma)^\ast$. We note for completeness that the boundary of $(D/\Gamma)^\ast$ has $2$ zero dimensional components and $14$ one dimensional components (\cite[Corollary $4.2$, Theorem $4.5$]{AK11}), although the component corresponding to bielliptic curves is the only part of the boundary we consider in this paper.
\end{remark}

\begin{proposition}

The natural birational forgetting map $j$ restricts to a surjective morphism
\begin{equation*}
j|_{\mfp_0^\text{sm}}: \mfp_0^\text{sm} \surj \mathcal{M}_6 \setminus \mathcal{H}_6,
\end{equation*}
where $\mathcal{H}_6$ denotes the hyperelliptic locus. 

\end{proposition}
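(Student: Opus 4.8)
\section*{Proof proposal}

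The plan is to realize $j|_{\mfp_0^\text{sm}}$ as the map induced on moduli by the tautological family of marked curves, to show its image avoids the hyperelliptic locus by an adjunction-plus-embedding argument, and to establish surjectivity by assembling the explicit pairs constructed in Subsections~\ref{pl_q}--\ref{biell} together with the del Pezzo model of a non-special curve. First I would check that $j|_{\mfp_0^\text{sm}}$ is a morphism. Over $\mfp_0^\text{sm}$ the marked divisor of the tautological family $\mathfrak{W} \to \mfp_0$ is a flat family of smooth curves, and since $D \sim -2K_X$ adjunction gives $2g(D)-2 = D\cdot(D+K_X) = (-2K_X)\cdot(-K_X) = 2K_X^2 = 10$, so every fiber has genus $6$. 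This flat family of smooth genus six curves induces a morphism $\mfp_0^\text{sm} \to \mathcal{M}_6$ by the universal property of $\mathcal{M}_6$, and this morphism agrees with the rational forgetting map; hence $j|_{\mfp_0^\text{sm}}$ is a morphism.

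Next I would show that the image avoids $\mathcal{H}_6$. Fix $(X,D) \in \mfp_0^\text{sm}$. Adjunction gives $K_D = (K_X+D)|_D = -K_X|_D$, and the exact sequence $0 \to \OO_X(K_X) \to \OO_X(-K_X) \to \OO_D(K_D) \to 0$, together with $H^0(\OO_X(K_X)) = 0$ and $H^1(\OO_X(K_X)) \cong H^1(\OO_X)^\vee = 0$ (the vanishing $H^1(\OO_X)=0$ is established in the proof of Lemma~\ref{Pic}), identifies $|K_D|$ with the restriction of the anticanonical system $|-K_X|$ to $D$. Because $D$ avoids $\mathrm{Sing}(X)$ and the anticanonical morphism is an embedding on the smooth locus of $X$ (realizing $X$ as a quintic del Pezzo surface in $\PP^5$ away from its quotient or elliptic singularities), the composite $D \hookrightarrow X \to \PP^5$ — which is precisely the canonical map $\phi_{|K_D|}$ — is an embedding. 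Since the canonical map of a hyperelliptic curve is never an embedding, $D$ is non-hyperelliptic, so $j(\mfp_0^\text{sm}) \subseteq \mathcal{M}_6 \setminus \mathcal{H}_6$.

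Finally, I would establish surjectivity. Every smooth non-hyperelliptic genus six curve is either non-special or one of the remaining special types: plane quintic, trigonal, or bielliptic. For a non-special curve $C$, the canonical model lies on a unique weak quintic del Pezzo $\Sigma_5$ with $C \in |-2K_{\Sigma_5}|$ smooth (\cite[Proposition 1.1]{AK11}); since $-2K_{\Sigma_5}$ is trivial against the $(-2)$-curves lying over the du Val points, $C$ avoids $\mathrm{Sing}(\Sigma_5)$, whence $(\Sigma_5, C) \in \mfp_0^\text{sm}$ and $j(\Sigma_5, C) = [C]$. For the special non-hyperelliptic curves, the pairs $(X,D)$ built in Subsections~\ref{pl_q}, \ref{M=0;4}, \ref{M=2}, and \ref{biell} have $D$ smooth and avoiding $\mathrm{Sing}(X)$ and, by Proposition~\ref{stable_prop}, lie in $\mfp_0$; hence they lie in $\mfp_0^\text{sm}$ and realize every smooth plane quintic, trigonal, and bielliptic curve. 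As these four families exhaust $\mathcal{M}_6 \setminus \mathcal{H}_6$, surjectivity follows.

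I expect the middle step to be the main obstacle. The subtlety is that $-K_X$ need not be very ample at the $\tfrac14(1,1)$ or simple elliptic singularities of $X$, so one must use crucially that $D$ lies in the smooth locus — where the anticanonical system does restrict to an embedding — in order to conclude that the canonical map of $D$ is an embedding rather than the two-to-one map of a hyperelliptic curve. Conceptually this reflects the fact that inside $\mfp_0$ hyperelliptic curves appear only through a \emph{singular} marked divisor (the $A_{13}$ construction of Subsection~\ref{hyper}), and never with $D$ smooth.
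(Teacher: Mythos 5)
Your surjectivity argument coincides with the paper's: the explicit pairs of Subsections~\ref{pl_q}--\ref{biell} account for the smooth plane quintics, trigonal curves, and bielliptic curves, the pairs $(\Sigma_5,C)$ on (weak) quintic del Pezzo surfaces account for the non-special curves, and Proposition~\ref{stable_prop} places all of these in $\mfp_0^\text{sm}$. On that half you and the author do essentially the same thing, and your preliminary check that $j|_{\mfp_0^\text{sm}}$ is a morphism (flat family of smooth genus-six curves via adjunction and the tautological family) is a harmless, correct addition.

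Where you diverge is the containment $j(\mfp_0^\text{sm})\subseteq \mathcal{M}_6\setminus\mathcal{H}_6$, which the paper's proof passes over (it only records that every marked curve in $\mfp_0^\text{sm}$ is smooth of genus six) and which you correctly single out as the delicate step. Your setup is right: $K_D=-K_X|_D$, and the vanishing of $H^0(\omega_X)$ and $H^1(\omega_X)$ identifies $|K_D|$ with the restriction of $|{-K_X}|$, so the canonical map of $D$ factors through $\phi_{|-K_X|}$. But the argument then rests on the unproved assertion that $\phi_{|-K_X|}$ embeds the smooth locus of an \emph{arbitrary} surface $X$ occurring in $\mfp_0$. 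This cannot be checked case by case from the paper, because Table~\ref{table:pairs} is only claimed to list the pairs the author constructs, not all pairs in $\mfp_0$ (the full enumeration is explicitly deferred to future work); and $-K_X$ is only $2$-Cartier at the index-two class $T$ points, so very ampleness of the Weil system $|{-K_X}|$ near $D$ is not automatic for an unclassified configuration of such singularities together with du Val and simple elliptic ones. For the constructed surfaces the claim is verifiable (e.g.\ for the $(1,1,1,1,1)$ plane quintic pair the system pulls back to $|\OO_{\PP^2}(2)|$ and the anticanonical map is the Veronese), but as a statement about every point of $\mfp_0^\text{sm}$ it needs its own proof. Either supply that proof (very ampleness of $|{-K_X}|$ in a neighborhood of $D$ for every allowed singularity combination), or replace the step by an argument avoiding it — for instance, showing $\phi_{|-K_X|}$ is birational onto a nondegenerate surface of degree at most $5$ in $\PP^5$ and ruling out that $D$ maps two-to-one onto a rational normal quintic. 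That is the one genuine gap; it sits, admittedly, in a step the paper itself does not argue.
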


\begin{proof}
By the explicit construction of the pairs in (\ref{pl_q})~--~(\ref{hyper}) and the definition of $\mfp_0^\text{sm}$ (Definition~\ref{stacks}), every smooth genus six non-hyperelliptic curve arises on a pair in $\mfp_0^\text{sm}$, and conversely, every curve on a pair in $\mfp_0^\text{sm}$ is smooth and of genus six. This verifies the claim that $j$ restricts to a surjection of $\mfp_0^\text{sm}$ onto $\mathcal{M}_6 \setminus \mathcal{H}_6$.

\end{proof}

\begin{remark} \label{sim_st}
Consider the tautological family $\mathfrak{W} \rightarrow \mfp_0$ as in the proof of Proposition~\ref{period}. We can construct a diagram

\begin{center}
\begin{tikzcd}[column sep=small]
\tilde \mfp_0 \arrow["\tilde j", d, swap] \arrow["\nu", r]  & \mfp_0 \arrow["j", dl, dashed, swap] & \\
\overline {\mathcal{M}}_6 
\end{tikzcd}
\end{center}
via simultaneous stable reduction (\cite[Theorem 3.5, Corollary 6.3]{CML13}). Moreover, the image of $\tilde j$ is a partial compactification of ${\mathcal{M}}_6$. As noted previously, every hyperelliptic curve can be realized via stable reduction of a curve with an $A_{13}$ singularity. Therefore, the image of $\tilde j$ contains $\mathcal{M}_6$. By definition, $\mfp_0$ contains pairs of the form $(\Sigma_5, C)$, where $C$ has ADE singularities. Stable reductions of such curves may (and will) be nodal; for example, consider cuspidal curves (these also exist on $\Sigma_5$ by the results in \cite[Table 2]{Ya96}). The image of $\tilde j$ consequently intersects the boundary of $\overline {\mathcal{M}}_6$. Note that $\tilde \mfp_0$ resolves the indeterminacy of $j$, but it is not immediately clear how to describe this space in a modular way over pairs containing singular curves.
\end{remark}

Table~\ref{table:pairs} below summarizes the pairs constructed in this section and their associated $K3$ surfaces.

\begin{table}[ht]
\centering 
\begin{tabular}{c c c c} 
\hline\hline 
$D$ & $X_{\text{sing}}$ & $\ZZ_i$ & Singularities of $K3$ \\ [0.5ex] 
\hline 
Plane quintic of type $(a_1, \dots, a_5)$ & $\frac{1}{4}(1,1) \displaystyle \oplus \bigoplus_{a_i>1} A_{a_i-1}$ & $\ZZ_1$ & $A_1 \displaystyle \oplus \bigoplus_{a_i>1} A^{\oplus 2}_{a_i-1}$ \\ 
Trigonal of type $(0; b_1, b_2, b_3, b_4)$ & $\frac{1}{4}(1,1) \displaystyle \oplus \bigoplus_{b_i>1} A_{b_i-1}$ & $\ZZ_2$ & $A_1 \displaystyle \oplus \bigoplus_{b_i>1} A^{\oplus 2}_{b_i-1}$  \\
Trigonal of type $(2; [a_1], a_2, a_3, a_4)$ & $\frac{1}{4(a_1+1)}(1, 2a_1+1) \displaystyle \oplus \bigoplus_{ \underset{i\neq 1}{a_i >1}} A_{a_i-1}$ & $\ZZ_2$ & $A_{2a_1+1} \displaystyle \oplus \bigoplus_{ \underset{i\neq 1}{a_i >1}} A^{\oplus 2}_{a_i-1}$ \\
Bielliptic & Simple elliptic & $\ZZ_3$ & Simple elliptic \\
$\sim -2K_{\Sigma_5}$ with an $A_{13}$ & Smooth & N/A & $A_{13}$ \\ [1ex] 
\hline 
\end{tabular}
\caption{Pairs $(X, D)$ constructed in subsections (\ref{pl_q})~--~(\ref{hyper}) and their associated $K3$ surfaces.} 
\label{table:pairs} 
\end{table}

\vspace{.60in}

\section{Construction of stable pairs via stable reduction}
\numberwithin{theorem}{section}
\counterwithin{theorem}{subsection}
\numberwithin{equation}{section}
\counterwithin{equation}{subsection}
In this section, we explain how to construct some of the pairs in the proof of Theorem~\ref{mainthm} via the Hassett-Keel program and stable reduction. We recall our discussion from Section $1$: In \cite{Mu14}, M\"uller shows that the final log canonical model of $\overline {\mathcal{M}}_6$ parametrizes quadric sections of $\Sigma_5$. In this section, we consider certain one-parameter degenerations over the germ of a smooth curve of quadric sections of $\Sigma_5$, where the generic fiber is smooth. We show that these families of pairs can be modified so that the new special fiber is a stable pair. In fact, we will recover some of the pairs containing special curves constructed in the previous section. The stable reduction process will involve applying the relative log minimal model program. We describe some examples below.

\vspace{.1in}

\subsection{Marked plane quintics of type (1,1,1,1,1)} \begin{proposition}
There exist quadric sections of $\Sigma_5$ with unique singularities of local analytic isomorphism type $y^5=x^5$.
\end{proposition}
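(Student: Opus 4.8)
The goal is to exhibit a quadric section of $\Sigma_5$ whose only singularity is analytically isomorphic to $y^5 = x^5$, i.e. five smooth branches meeting at a single point (the cone over five points, or an ordinary fivefold point). The plan is to construct such a curve explicitly by degenerating the configuration of five lines through a common point back onto $\Sigma_5$ via the standard realization of $\Sigma_5$ as a blow-up of $\PP^2$. Recall that $\Sigma_5$ is the blow-up of $\PP^2$ at four general points $p_1, \dots, p_4$, and that the anticanonical class is $-K_{\Sigma_5} = 3L - \sum_i G_i$, so that a quadric section $-2K_{\Sigma_5} = 6L - 2\sum_i G_i$ is the strict transform of a plane sextic with nodes (or worse) at the four blown-up points.

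First I would translate the question into plane curve language: a member of $|-2K_{\Sigma_5}|$ corresponds to a sextic $\Gamma \subset \PP^2$ having multiplicity $2$ at each of the four points $p_1, \dots, p_4$ (and otherwise disjoint from the exceptional and $(-1)$-curves so as not to acquire extra singularities on $\Sigma_5$). The singularity $y^5 = x^5$ is an ordinary quintuple point — five distinct smooth branches through one point. So the concrete task becomes: produce a plane sextic with an ordinary quintuple point at some point $q$, double points at $p_1,\dots,p_4$, and no other singularities, with $q$ and the $p_i$ in general position. This is feasible by a dimension count: sextics form a $\PP^{27}$, imposing an ordinary quintuple point at $q$ costs $\binom{5+1}{2} = 15$ conditions, and imposing a double point at each $p_i$ costs $3$ conditions each, for $15 + 12 = 27$ conditions total, leaving a positive-dimensional (indeed expected nonempty) family once one checks the conditions are independent for general point positions.

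The cleanest construction, and the one I would actually write down, is to take $\Gamma$ to be the union of five general lines through a common point $q$ (a pencil of five lines), which automatically has an ordinary quintuple point at $q$ and is a \emph{quintic}, not a sextic — so instead I would take the product of five such lines only if degree matched, and since it does not, I would rather produce the sextic as a small deformation or as an explicit polynomial. Concretely, working in affine coordinates centered at $q$, I would write the sextic as $f = f_5 + f_6$ where $f_5$ is a product of five general linear forms (giving the ordinary quintuple point $y^5 = x^5$ after a coordinate change) and $f_6$ is a general degree-six form chosen so that $\Gamma$ acquires the required double points at the four points $p_i$ placed in general position away from $q$. I would then verify that for a general such $f_6$, the only singularity of $\Gamma$ is the quintuple point at $q$ plus the four imposed nodes, by a Bertini-type genericity argument on the linear system of sextics singular at $p_1,\dots,p_4$ and with the prescribed $5$-jet at $q$. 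Passing to the strict transform on $\Sigma_5$ resolves the four double points into the blown-up structure, leaving the quintuple point as the unique singularity of the quadric section.

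The main obstacle I anticipate is verifying that the singularity at $q$ is \emph{exactly} of type $y^5 = x^5$ — an ordinary quintuple point with five distinct tangent lines — rather than some more degenerate quintuple point, and simultaneously that no unexpected singularities appear elsewhere on $\Sigma_5$ (in particular that $\Gamma$ meets the four exceptional curves and the $(-1)$-lines transversally and avoids their mutual intersections). Both are open genericity conditions, so the real work is a careful dimension count confirming that the linear subsystem of $|-2K_{\Sigma_5}|$ with an ordinary quintuple point is nonempty and that its general member avoids the bad loci; the tangent-cone condition at $q$ reduces to requiring that the leading form $f_5$ factor into five distinct linear factors, which is an open dense condition on the $5$-jet. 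I would close the argument by exhibiting one explicit $f_5$ (e.g. a product of five lines in general position) and invoking semicontinuity to conclude the generic member of the appropriate linear system has the claimed unique singularity type.
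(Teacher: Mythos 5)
Your reduction to plane geometry is the right one, and your count $27=15+12$ does prove that the linear system of sextics with a point of multiplicity $\geq 5$ at $q$ and multiplicity $\geq 2$ at $p_1,\dots,p_4$ is nonempty (linear conditions on $\PP^{27}$ cut the dimension down by at most $27$). But the genericity arguments you lean on afterwards do not apply, and your explicit ansatz fails. By B\'ezout, any sextic $C$ in this system must contain each line $\ell_i=\overline{qp_i}$: otherwise $C\cdot\ell_i\geq 5+2=7>6$. Writing $C=\ell_1+\cdots+\ell_4+R$, the residual conic $R$ is forced through all five points $q,p_1,\dots,p_4$, hence is the unique smooth conic through them. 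So for general positions the linear system is a single point --- the reducible curve (four lines through $q$) together with (the conic through all five points). It is zero-dimensional, not positive-dimensional; there is no ``general member'' on which to run Bertini; and the tangent cone at $q$ is forced to contain the four directions $\overline{qp_i}$, so your construction $f=f_5+f_6$ with $f_5$ a product of five \emph{general} linear forms admits no solution $f_6$ (you are imposing $12$ affine-linear conditions on the $7$-dimensional space of degree-six leading terms, and the system is inconsistent unless $f_5$ vanishes on those four directions).

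The fix is short: identify and check the unique member, which is exactly what the paper does. It takes the smooth conic through $p_1,\dots,p_5$ together with the four lines $\overline{p_5p_i}$; the five branches at $p_5$ are smooth with pairwise distinct tangents (each line is a secant, not a tangent, of the conic), the only other singularities are the four nodes at the $p_i$, and these are resolved by the blow-up, the strict transform having class $6L-2\sum G_i=-2K_{\Sigma_5}$. One caveat applying to both your write-up and the paper's: an ordinary quintuple point is analytically isomorphic to its (reduced) tangent cone, but five concurrent lines carry moduli (cross-ratios of the tangent directions), so ``type $y^5=x^5$'' should be read as ``ordinary quintuple point''; this is harmless for the log canonical threshold and stable reduction computations the proposition feeds into.
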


\begin{proof}
Let $p_1, \dots, p_4$ denote points in $\PP^2$ in general position. Choose another point $p_5$, determining a smooth irreducible plane conic. Consider the union of this conic with the four lines connecting $p_5$ to each of the other $p_i$. We have constructed a reducible plane sextic curve with $5$ components meeting transversely at $p_5$. Blowing up $p_1, \dots, p_4$ and anti-canonically embedding the resulting surface in $\PP^5$ recovers $\Sigma_5$ with a quadric section of the desired singularity type.
\end{proof}

\begin{remark}
For future reference, let $C_0$ denote a curve in $\Sigma_5$ with this singularity type. Note that the log canonical threshold of the pair $(\Sigma_5, C_0)$ is $2/5<1/2$, hence this pair cannot be stable.
\end{remark}

\begin{proposition} \label{pl_q_mmp}
Let $(\mathcal{S}, \mathcal{C}) \rightarrow T$ be a family of surface-curve pairs over the germ of a smooth curve such that the generic fiber is a smooth quadric section of $\Sigma_5$ and the special fiber is $(\Sigma_5, C_0)$. There exists a family $(\mathcal{S'}, \mathcal{C'}) \rightarrow T'$ satisfying the following:
\begin{enumerate}

\item The generic fiber is isomorphic to the generic fiber of the original family. 
\item The special fiber is a stable pair with a unique $\frac{1}{4}(1,1)$ singularity and marked curve isomorphic to a smooth plane quintic.

\end{enumerate}
\end{proposition}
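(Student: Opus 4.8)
The plan is to realize the stable limit by an explicit semistable-type modification of the total space followed by the relative log minimal model program, and to recognize the resulting central fiber as precisely the pair constructed in Subsection~\ref{pl_q} (and computed in Example~\ref{pl_q_2}) for plane quintics of type $(1,1,1,1,1)$. First I would record the obstruction: as noted above, $\mathrm{lct}(\Sigma_5, C_0) = 2/5 < 1/2$, so the ordinary quintuple point of $C_0$ forces $(\Sigma_5,(1/2+\epsilon)C_0)$ to fail to be log canonical. Hence $(\Sigma_5, C_0)$ is not stable and the central fiber must genuinely change. Working analytically near the quintuple point $P=(\bar p,0)$, where $\mathcal{S}=\Sigma_5\times T$ is smooth with coordinates $(x,y,t)$, the marked curve is $\mathcal{C}=\{f_5(x,y)+t\,g_1+\cdots=0\}$ with $f_5$ a product of five distinct linear forms.

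The key step is a base change $t=s^5$, chosen to match the weight of the smoothing term with that of $f_5$: after pulling back, the leading form becomes $f_5(x,y)+c\,s^5$ with $c=g_1(\bar p)\neq 0$ for a generic (transverse) smoothing. I would then blow up the point $P$ on the still-smooth base-changed total space $\mathcal{S}'$. The exceptional divisor is $E\cong\PP^2$, the strict transform of the marked curve meets $E$ in the \emph{smooth} plane quintic $D=\{f_5+c\,s^5=0\}$ (which avoids the point $[0:0:1]$), the strict transform $\tilde\Sigma$ of the old central fiber $\Sigma_5$ meets $E$ along a line $\ell$, and $D\cap\ell=\{f_5=0\}$ consists of exactly five transverse points, namely the five tangent directions of the quintuple point. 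Thus the configuration $(E,D,\ell)$ on the exceptional $\PP^2$ is precisely the input to the construction of Subsection~\ref{pl_q}. (The base change is genuinely needed: a naive weighted blow-up without it produces a rational limit curve, contradicting constancy of arithmetic genus in a flat family.)

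The final step is to run the relative log MMP for $(\mathcal{S}',(1/2+\epsilon)\mathcal{C}')$ over $T'$ and contract the old fiber component $\tilde\Sigma$. Because $\ell\subset E$ has positive self-intersection it cannot be contracted directly; instead the pair fails to be dlt at the five points $D\cap\ell$ where the marked curve crosses the double curve, and the program first extracts divisors over these five points. This turns the strict transform of $\ell$ into a $(-4)$-curve, which is then contracted to an index-two $\tfrac14(1,1)$ class $T$ singularity, leaving a central fiber with a unique $\tfrac14(1,1)$ point and marked curve the smooth quintic $D$ disjoint from it. I would identify this output with the pair of Subsection~\ref{pl_q}, which by Proposition~\ref{stable_prop} is a stable pair in $\mfp_0$ with $K_X+(1/2+\epsilon)D$ ample (Example~\ref{pl_q_2}). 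Since $\mfp$ is separated, this is the unique stable limit, independent of the chosen smoothing.

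The main obstacle is this last step: verifying that the relative log canonical model has \emph{exactly} this central fiber, i.e. that contracting $\tilde\Sigma$ — after the forced extractions over $D\cap\ell$ — produces precisely the $\tfrac14(1,1)$ singularity and no others, while the marked curve survives as a smooth quintic avoiding the singular point. This is a discrepancy and intersection-theoretic computation on the three-fold, which I would organize so that it reduces to the Picard-group intersection numbers already carried out in Example~\ref{pl_q_2}, with uniqueness of stable limits absorbing any dependence on the higher-order terms of the smoothing.
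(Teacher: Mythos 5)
Your setup agrees with the paper's: the order-five base change $t\mapsto t^5$, the blow-up of the quintuple point of $C_0$ in the (still smooth) total space, and the identification of the exceptional $\PP^2$ carrying the smooth quintic $D=\{f_5+cs^5=0\}$ and the line $\ell=E\cap\tilde\Sigma$ with $D\cap\ell$ five transverse points — this is precisely the configuration of Subsection~\ref{pl_q}, and your predicted endpoint is correct. The genuine gap is in the log MMP step, which you both defer and misdescribe. The relative log MMP has no ``extraction'' operation — its steps are flips and divisorial contractions — and in any case the pair is log canonical at the five transverse crossings $D\cap\ell$ (a coefficient-$(1/2+\epsilon)$ curve meeting the double curve transversely at a smooth point has log discrepancy $1/2-\epsilon>0$ for the blow-up), so nothing is ``forced'' there. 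More importantly, you have dropped half of the marked central fiber: after the blow-up, $\mathcal{C}'$ restricts to $C_1\cup D$, where $C_1$ is the strict transform of $C_0$ on $\tilde\Sigma\cong\mathrm{Bl}_{p_1,\dots,p_5}\PP^2$, namely the five disjoint $(-1)$-curves $F_i=H-E_i-E_5$ ($i\le 4$) and $F_5=2H-\sum_j E_j$. These are exactly the $\bigl(K+(1/2+\epsilon)\mathcal{C}'+E\bigr)$-negative extremal rays, and the paper flips them explicitly: blowing up $F_i$ in the threefold yields a $\PP^1\times\PP^1$ exceptional divisor, and contracting the other ruling removes $F_i$ from $\tilde\Sigma$ while blowing up the corresponding point of $D\cap\ell$ on $E$. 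It is these five flips — not extractions triggered by a dlt failure — that turn $\ell$ into a $(-4)$-curve \emph{and} dispose of $C_1$, which otherwise you have given no reason to vanish from the limit marked curve.

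Once the flips are accounted for, the rest is as you say: $\tilde\Sigma$ becomes a $\PP^2$ with no marked curve and double curve a conic $B_1'$, so $K_{S_1'}+B_1'=-H'$ is anti-ample and $S_1'$ is divisorially contracted; on the other component this contracts the $(-4)$-curve and produces the $\frac14(1,1)$ point, with the smooth quintic $D$ disjoint from it. So the missing content of your proof is precisely the identification and explicit flipping of the components of $C_1$; without it the argument neither produces the $(-4)$-curve by a legitimate MMP step nor explains why the final marked curve is only the smooth quintic.
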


\begin{proof}
We first run local stable reduction for the singularity of $C_0$ in the special fiber. We view $(\mathcal{S}, \mathcal{C})\rightarrow T$ as a family of surfaces $\mathcal{S}$ containing $\mathcal{C}$. We perform a base change $t \mapsto t^5$, where $t$ is a uniformizing parameter of $T$. We denote this finite cover of $T$ by $T'$. We then blow up $\mathcal{S}$ at the singular point of $C_0$. 

This process yields a reducible surface $S_1 \cup S_2$ in the central fiber of the modified family. Let the double curve on $S_i$ be denoted by $B_i$. The surface $S_1$ is isomorphic to $\Sigma_4$ (a degree $4$ del Pezzo surface) marked with $C_1$ (the strict transform of $C_0$). A local computation shows that $S_2$ is isomorphic to $\PP^2$ marked with a smooth plane quintic $C_2$ meeting $B_2$ transversely. On $S_1$, the curve $B_1$ is the exceptional divisor when we blow up $\Sigma_5$ at the singular point of $C_0$, and on $S_2$, the curve $B_2$ is the hyperplane class.

Note that the special fiber of the resulting family is still not a stable pair. Consider the components of $C_1$, denoted $F_i$ for $i=1, \dots, 5$. If $H$ is the pullback of the hyperplane class from $\PP^2$ to $\Sigma_4$ and the $E_i$ are the exceptional divisors, we see explicitly:

\begin{enumerate}
\item $F_i=H-E_i-E_5$ for $i=1, \dots 4$.
\item $F_5=2H-\underset{j=1}{\overset{5}{\sum}}E_j$.
\end{enumerate}

These $F_i$ are all irreducible $(-1)$-curves and hence span extremal rays in the closure of the cone of effective curves $\overline {NE}(S_1)$. Consequently, these curves also span extremal rays in the closure of the relative cone of curves for our modified family.

The $F_i$ are all $K_{S_1}+\alpha C_1+B_1$-negative for all $\alpha>1/2$ by adjunction.  We explicitly construct flips of these curves. Note that after we flip one of these, each of the remaining $F_i$ can still be flipped via the same construction. A standard normal bundle computation shows that blowing up any one of the $F_i$ yields an exceptional divisor isomorphic to $\PP^1 \times \PP^1$, realizing the curve as one of the rulings. Projecting to the other ruling (this requires the contraction theorem) contracts $F_i$ on $S_1$ and blows up the point $B_2 \cap F_i$ on $S_2$.

Flipping all of the $F_i$ in this way yields a new surface $S'_1 \cup S'_2$, where $S'_1$ is isomorphic to $\PP^2$ and $S'_2$ is isomorphic to $\PP^2$ blown up at $5$ collinear points. Note that $S'_1$ has no marked curve and $S'_2$ is still marked with a curve isomorphic to a smooth plane quintic $C'_2$. The curve $B_1$ becomes a conic $B'_1$ in $S'_1$ after these flips. Hence the hyperplane class $H'$ in $S'_1$ is negative with respect to 
\begin{equation*}
K_{S'_1}+B'_1=-H',
\end{equation*}
which induces a divisorial contraction of $S'_1$. We are left with a surface $S''_2$, which is simply the contraction of the $(-4)$-curve $B'_2$ (the strict transform of $B_2$ after the flips) on $S'_2$. Hence $S''_2$ has a unique cyclic quotient singularity of type $\frac{1}{4}(1,1)$.
\end{proof}
\begin{remark}
We note that $S''_2$ is precisely the surface constructed in ~(\ref{pl_q}) corresponding to marked plane quintics of type $(1,1,1,1,1)$.
\end{remark}

\vspace{.2in}

\subsection{Marked trigonal curves of type $(2; [4])$}

\begin{proposition}
There exist quadric sections of $\Sigma_5$ with unique singularities of local analytic isomorphism type $y^3=x^7$.
\end{proposition}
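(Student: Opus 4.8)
The plan is to mirror the construction used for the quintuple-point curve in the preceding proposition: exhibit an explicit low-degree model, blow up the four base points of $\Sigma_5$, and read off the singularity in local coordinates. Recall that under a blow-down $\Sigma_5 \to \PP^2$ at four general points $p_1,\dots,p_4$, a member of $|-2K_{\Sigma_5}|$ is the strict transform of a plane sextic with a double point at each $p_i$; conversely, any plane sextic having (at worst) double points at the $p_i$ and all of whose remaining singularities are disjoint from the $p_i$ yields a quadric section of $\Sigma_5$ carrying exactly those remaining singularities. So it suffices to produce a sextic with double points at $p_1,\dots,p_4$ and a single further singular point $q$ of local analytic type $y^3 = x^7$: the four blow-ups resolve the points over the $p_i$, while $q$ is untouched because the blow-down is a local isomorphism there.

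First I would write the sextic in local coordinates centered at $q$. The $(3,7)$ cusp has tangent cone $y^3$ and a single Puiseux pair, so I would seek an equation whose Newton polygon has vertices $(0,3)$ and $(7,0)$ and is non-degenerate; finite determinacy then pins the analytic type to $y^3=x^7$. I would then impose the four double-point conditions and verify, by a dimension count on the linear system of sextics, that the contact conditions at $q$ together with the eight node conditions can be met while keeping $q$ and the $p_i$ in general position. Since the cusp is unibranch with $\delta = 6$, the resulting curve is rational; an alternative and perhaps cleaner existence route is therefore to prescribe the normalization map $\PP^1 \to \Sigma_5 \subset \PP^5$ directly, with local expansion $(x,y) = (t^3, t^7)$ at the image of $q$, and then check that the image lies in the correct class $-2K_{\Sigma_5}$. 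As in the previous proof, assembling the curve from lower-degree pieces with controlled contact is a useful organizing device.

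Having produced the model, I would blow up $p_1,\dots,p_4$ and embed anticanonically in $\PP^5$ to recover $\Sigma_5$; the strict transform is then the desired quadric section, singular only at $q$ with the prescribed analytic type. I would close by recording that the exponents $(3,7)$ are the shadow of the Maroni-invariant-$2$ trigonal class $3e+7f$ on $\F_2$: under the base change $t \mapsto t^3$ and the relative minimal model program carried out in the next subsection, the stable limit of $(\Sigma_5, C_0)$ should be exactly the type $(2;[4])$ pair of Subsection~\ref{M=2}, which is the reason this particular cusp is the relevant degeneration to exhibit here.

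The hard part will be certifying the \emph{exact} local analytic type rather than merely producing some singularity with the right multiplicity. The constraint is tight: I must rule out nearby types $y^3 = x^k$ for other $k$, as well as a degeneration to a reducible or multibranch configuration, all while respecting the severe limitations imposed on the degree-six plane model of a rational curve. I expect to spend most of the effort on this verification, using the non-degeneracy of the Newton polygon together with finite determinacy (or, in the normalization approach, a direct computation of the semigroup $\langle 3,7\rangle$ from the parametrization) to confirm that the singularity is analytically $y^3=x^7$ and that it is the unique singular point of the constructed quadric section.
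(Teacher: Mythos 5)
Your global frame agrees with the paper's: both arguments reduce the proposition to producing a plane sextic with four nodes at general points $p_1,\dots,p_4$ and one further singular point $q$ of analytic type $y^3=x^7$, then blow up the nodes and embed anticanonically to recover $\Sigma_5$ with the desired quadric section. The difference is that the paper disposes of the existence of such a sextic in one line by citing Degtyarev's classification (\cite[1.10]{De90}), whereas you propose to construct it by hand --- and that construction is exactly where your write-up stops. You say yourself that certifying the exact local analytic type is ``the hard part'' and that you expect to spend most of the effort there; that verification is the entire content of the proposition, so as written the proposal is a plan rather than a proof.

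Moreover, both concrete routes you sketch fail as literally stated. A degree-$6$ polynomial in affine linear coordinates centered at $q$ cannot contain the monomial $x^7$, so its Newton polygon cannot have the vertex $(7,0)$; equivalently, a branch with local expansion $(x,y)=(t^3,t^7)$ in ambient linear coordinates would meet its tangent line $y=0$ with multiplicity $7$ at $q$, and since the germ $y^3=x^7$ is unibranch that line is not a component of the sextic, contradicting B\'ezout ($7>6$). The singularity can only be realized after a shear: the local parametrization must have the shape $(t^3,\,ct^6+t^7+\cdots)$ with $c\neq 0$, so that the defining equation has $3$-jet $(y-cx^2)^3$ and the tangent line has contact exactly $6$. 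Once that is corrected, a second issue appears: the topological class of $y^3=x^7$ is Arnold's $E_{12}$, whose analytic normal form $x^3+y^7+axy^5$ carries a genuine modulus, with $y^3=x^7$ the special quasi-homogeneous member $a=0$ (for $a\neq 0$ one has $\tau=11<\mu=12$, so those germs are not analytically equivalent to it). Newton non-degeneracy and a condition count on the $27$-dimensional system of sextics at best control the topological type --- and non-emptiness of an equisingular stratum does not follow from a naive count anyway --- while pinning down $a=0$ subject to the degree-$6$ constraint is an additional closed condition your argument never addresses. (Minor point: rationality of the curve uses the nodes as well, $10-6-4\cdot 1=0$, not the cusp alone.) All of this is precisely the verification the paper outsources to \cite{De90}; if you want a self-contained proof, you must either carry out that normal-form computation or weaken the statement to the topological type, which is all the subsequent stable-reduction argument actually uses.
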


\begin{proof}

By {\cite[1.10]{De90}}, there exists a plane sextic curve with such a singularity as well as four nodes in general position. Blowing up these nodes and anti-canonically embedding the resulting surface in $\PP^5$ recovers $\Sigma_5$ with a quadric section of the desired singularity type.
\end{proof}

\begin{remark}
For future reference, we will denote by $C_0$ a curve with this singularity type. The log canonical threshold of the pair $(\Sigma_5, C_0)$ is less than $1/2$, hence this pair cannot be stable. 
\end{remark}

\begin{proposition}
Let $(\mathcal{S}, \mathcal{C}) \rightarrow T$ be a family of surface-curve pairs over the germ of a smooth curve such that the generic fiber is a smooth quadric section of $\Sigma_5$ and the special fiber is $(\Sigma_5, C_0)$. There exists a family $(\mathcal{S'}, \mathcal{C'}) \rightarrow T'$ satisfying the following:
\begin{enumerate}

\item The generic fiber is isomorphic to the generic fiber of the original family. 
\item The special fiber is a stable pair with a unique $\frac{1}{20}(1,9)$ singularity and marked curve isomorphic to a smooth genus six trigonal curve. 

\end{enumerate}
\end{proposition}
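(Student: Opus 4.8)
The plan is to mirror the strategy of Proposition~\ref{pl_q_mmp}: first perform a local stable reduction at the singular point of $C_0$ by combining a base change of $T$ with a blow-up of the total space $\mathcal{S}$, and then run the relative log minimal model program for $K_{\mathcal{S}} + (1/2 + \epsilon)\mathcal{C}$ to contract the superfluous components of the resulting reducible central fiber. The essential new feature is that the singularity $y^3 = x^7$ is unibranch, so the single blow-up at a point of transverse branches used in Proposition~\ref{pl_q_mmp} must be replaced by a \emph{weighted} blow-up adapted to the Newton polygon of $y^3 - x^7$ (whose relevant edge is governed by the weights $(3,7)$), together with a base change whose degree is chosen to eliminate the monodromy of the cusp (which has order dividing $\mathrm{lcm}(3,7)=21$).

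After this modification, I expect the central fiber to become a chain of surfaces $S_1 \cup S_2 \cup \cdots$ with double curves $B_i$: one end component a blow-up of a del Pezzo surface carrying the strict transform of $C_0$ (now smooth, or with much milder singularities), and an opposite exceptional component on which the limiting curve becomes a smooth trigonal curve. Because we anticipate a Maroni invariant $2$ curve, the component receiving the flipped curves should be identifiable with a blow-up of $\F_2$, so that after the final contraction the double curve produces the chain
\begin{equation*}
[-3, -2, -2, -2, -3]
\end{equation*}
of Subsection~\ref{M=2}. I would compute the classes of the double curves and of the strict transform of $C_0$ on each component, exactly as in Example~\ref{M=2, ex}, to pin down these identifications.

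Next I would run the relative log MMP. As in Proposition~\ref{pl_q_mmp}, the $K + \alpha\mathcal{C} + B$-negative extremal rays (for $\alpha > 1/2$) are spanned by explicit $(-1)$-curves; I would flip each via the blow-up and re-contraction of a $\PP^1 \times \PP^1$ exceptional divisor as was done there, and then perform the divisorial contraction of the component carrying no marked curve. The terminal step contracts the chain $[-3,-2,-2,-2,-3]$, producing a surface $X$ with a unique $\frac{1}{20}(1,9)$ singularity; one checks via the continued fraction $20/9 = [3,2,2,2,3]$ that this is precisely the class $T$ singularity of Subsection~\ref{M=2} with $a_1 = 4$. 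Tracking the marked curve through these steps shows that its limit is a smooth genus six trigonal curve of type $(2;[4])$, and the resulting pair is exactly the one constructed in Example~\ref{M=2, ex}, which is a stable pair by Proposition~\ref{stable_prop}.

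The main obstacle is the local analysis of the cusp $y^3 = x^7$: determining the correct base change degree and the weights of the blow-up so that the total space acquires only mild (toric) singularities and the central fiber is the expected reducible surface. Unlike the transverse case, the weighted blow-up introduces quotient singularities in the total space that must be resolved or carefully tracked through the MMP, and verifying that the requisite flips exist and terminate with \emph{precisely} the chain $[-3,-2,-2,-2,-3]$ — rather than some other chain contracting to a different class $T$ singularity — is the delicate bookkeeping at the heart of the argument.
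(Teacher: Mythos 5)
Your overall strategy coincides with the paper's: perform a local stable reduction of the $y^3=x^7$ singularity (the paper invokes \cite{Has00} for this; your base change plus weighted blow-up with weights governed by $(3,7)$ is the standard way to implement it), then run the relative log MMP --- one round of flips of $(-1)$-curves followed by a divisorial contraction --- and you correctly predict the endpoint, including the chain $[-3,-2,-2,-2,-3]$ and the continued fraction $20/9=[3,2,2,2,3]$ identifying the $\frac{1}{20}(1,9)$ singularity of Subsection~\ref{M=2} with $a_1=4$.

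However, the step you defer as ``the main obstacle'' is precisely the content of the paper's proof, so as written the argument has a genuine gap. The local stable reduction must be carried out far enough to identify the central fiber explicitly: it has exactly two components (not a longer chain), with $S_1$ obtained from the embedded resolution of $C_0$ in $\Sigma_5$ by contracting the exceptional curves disjoint from the strict transform $C_1$, leaving quotient singularities of types $\frac{1}{7}(1,4)$ and $\frac{1}{3}(1,2)$ on the double curve, and with the tail $S_2\cong\PP(7,3,1)$, carrying singularities $\frac{1}{7}(1,3)$ and $\frac{1}{3}(1,1)$ on its double curve and the smooth genus six trigonal curve $C_2$ meeting the double curve transversely in one point. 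Your anticipated picture differs on two points of substance: the trigonal curve lives on the weighted projective tail from the outset (it does not ``become'' trigonal on a blow-up of $\F_2$ after flips; the relation to $\F_2$ only emerges after the final contraction, as in the paper's closing remark), and the component carrying $C_1$ retains non--du Val quotient singularities rather than being ``a blow-up of a del Pezzo surface'' with the curve merely mildened. There is moreover only a single flip here (of the irreducible $(-1)$-curve $C_1$, which moves the marking entirely onto the tail), not a collection of them as in Proposition~\ref{pl_q_mmp}. Finally, the divisorial contraction of $S'_1$ requires an actual ampleness verification for $-K_{S'_1}-B'_1$; the paper does this by pulling back to the embedded resolution and using ampleness of $C_0$ on $\Sigma_5$, and some such intersection-theoretic argument is needed --- without it, and without the explicit identification of the tail, the claim that the output is exactly the pair of Example~\ref{M=2, ex} rather than some other class $T$ degeneration remains unproved.
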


\begin{proof}
Running local stable reduction for the family (see \cite{Has00}) yields a reducible surface $S=S_1 \cup S_2$ in the central fiber. Define $B_i$ as in Proposition~\ref{pl_q_mmp}. The surface $S_1$ is constructed by computing the embedded resolution of $C_0$ and contracting the exceptional divisors disjoint from its strict transform $C_1$. Let $F_i$, $i=1,2,3,4,5$, denote the exceptional divisors for this embedded resolution, where the indexing indicates the order in which these divisors appear as we repeatedly blow up points. In particular, the divisor $F_5$ denotes the exceptional curve which is not contracted post-embedded-resolution. We see that $S_1$ has two cyclic quotient singularities of type $\frac{1}{7}(1,4)$ and $\frac{1}{3}(1,2)$ along $B_1=F_5$. The surface $S_2$ is isomorphic to the weighted projective space $\PP(7,3,1)$, which has two cyclic quotient singularities of type $\frac{1}{7}(1,3)$ and $\frac{1}{3}(1,1)$ along $B_2$. Note that the singular points of $S_1$ are also singular on $S_2$. The curve $C_2 \subset S_2$ is smooth and trigonal of genus six avoiding the singularities and meeting $B_2$ transversely at one point.

Note that by adjunction applied to $C_1$ in $S_1$, the pair $(S_1 \cup S_2, C_1 \cup C_2)$ is not stable. The embedded resolution computation also reveals that $C_1$ is an irreducible $(-1)$-curve. Flipping $C_1$ as in Proposition~\ref{pl_q_mmp} amounts to contracting $C_1$ on $S_1$ while blowing up the point $C_2 \cap B_2$ on $S_2$. 

We denote the remaining reducible surface $S'_1 \cup S'_2$. We will show that the divisor
\begin{equation*}
-K_{S'_1}-F'_5
\end{equation*}
is ample, where $F'_5$ denotes the image of $F_5$ in $S'_1$ after flipping $C_1$. Let $\pi_1: S_1 \rightarrow \Sigma_5$ denote the sequence of blow-ups required for the embedded resolution of $C_0$, and let $\pi_2: S_1 \rightarrow S'_1$ denote the contraction of $F_i$ for $i=1,2,3,4$ and $C_1$. We note that $S'_1$ is $\mathbb{Q}$ factorial, so pulling back divisors makes sense. We compute
\begin{equation}
\pi_1^*(-2K_{\Sigma_5})=-2K_{S_1}+2F_1+4F_2+6F_3+12F_4+18F_5.
\end{equation}
On the other hand,
\begin{equation}
\pi_1^*(-2K_{\Sigma_5})=\pi^*(C_0)=C_1+3F_1+6F_2+7F_3+14F_4+21F_5,
\end{equation}
hence
\begin{equation} \label{5.3}
-2K_{S_1}=C_1+F_1+2F_2+F_3+2F_4+3F_5.
\end{equation}
Next, using ~(\ref{5.3}), we see that
\begin{equation} \label{5.4}
\pi_2^*(-2K_{S'_1}-2F'_5)=C_1+\frac{1}{7}F_1+\frac{2}{7}F_2+\frac{1}{3}F_3+\frac{2}{3}F_4+F_5.
\end{equation}

Now, let $C' \neq F'_5$ be a curve in $S'_1$. We want to show that the strict transform of $C'$ under $\pi_2$, henceforth denoted $\tilde C'$, is positive against the divisor in ~(\ref{5.4}). Suppose further that $\tilde C'$ is not any of the $F_i$ or $C_1$; then it is non-negative along each of these divisors. Suppose by contradiction that $\tilde C'$ is trivial along the $F_i$ and $C_1$. By construction, $\tilde C'$ is not $\pi_1$--exceptional, hence triviality against $C_1$ means that the scheme-theoretic intersection of the images of these two curves on $\Sigma_5$ is supported on the singular point of $C_0$. However, triviality of $\tilde C'$ against the $F_i$ implies that the image of $\tilde C'$ on $\Sigma_5$ is disjoint from $C_0$. This is absurd, since $C_0$ is an ample divisor. We have shown that $\tilde C'$ is indeed positive against the divisor in ~(\ref{5.4}).

It follows from this discussion that to verify ampleness of $-K_{S'_1}-2F'_5$, it is enough to check that the divisor in ~(\ref{5.4}) is positive against $F_5$, which it is. Hence we can divisorially contract $S'_1$ and we are left with a surface $S''_2$ with the desired cyclic quotient singularity. 

\end{proof}

\begin{remark}
Let $\phi_2: S'_2 \rightarrow S''_2$ denote the minimal resolution of $S''_2$. By explicitly blowing down $(-1)$-curves on $S'_2$, we obtain $\F_2$. Moreover, we see that $S''_2$ is precisely the surface constructed in the proof of Theorem~\ref{mainthm} associated to marked smooth trigonal curves of genus six and type $(2; [4])$. 

We also note that the divisorial contraction of $S'_1$ in this proof is of relative Picard number $5$, and the total space of the output family is not $\mathbb{Q}$-factorial. 
\end{remark}

\vspace{.2in}

\subsection{Marked trigonal curves of type $(0; 1,1,1,1)$}: Consider a triple plane conic $D$. Blow up four points on the curve in general position in $\PP^2$ to recover $\Sigma_5$ and consider the union of the strict transform $\tilde D$ with the exceptional divisors $E_i$. The resulting reducible curve $D'$ has class $-2K_{\Sigma_5}$. 

Consider a family of smooth quadric sections of $\Sigma_5$ degenerating to $D'$ as in the prior examples. Blow up the resulting family of surfaces along $\tilde D_{\text{red}}$. The exceptional divisor of this blow-up is isomorphic to $\PP^1 \times \PP^1$. So the new central fiber is a reducible surface $S_1 \cup S_2$, where $S_1 \cong \Sigma_5$ and $S_2 \cong \PP^1 \times \PP^1$. These surfaces are attached along one of the rulings of $\PP^1 \times \PP^1$. Each of the $E_i$ intersects the double curve at a single point. The strict transform of the blown up curve lies in $\PP^1 \times \PP^1$ and meets the double curve transversely in four points; these are precisely the intersection points of the $E_i$ with the double curve.

By adjunction applied to each $E_i$ in $S_1$, the reducible surface $S_1 \cup S_2$ and its marked curve do not form a stable pair. After flipping the $E_i$ as in Proposition~\ref{pl_q_mmp}, we obtain a reducible surface where one component is isomorphic to $\PP^2$ and the other component is isomorphic to $\PP^1 \times \PP^1$ blown up at four points along a ruling. We can divisorially contract the $\PP^2$ component as in Proposition~\ref{pl_q_mmp}. This amounts to contracting the $(-4)$-curve on this blow up of $\PP^1 \times \PP^1$, and we obtain the expected surface. 

\vspace{.1in}

\subsection{Bielliptic curves:} Consider a double plane cubic $D$. Blow up four points on the curve in general position in $\PP^2$ to recover $\Sigma_5$, and consider the strict transform $\tilde D$, which has class $-2K_{\Sigma_5}$. Consider a family of smooth quadric sections of $\Sigma_5$ degenerating to $\tilde D$ as in the prior examples. Blow up this family of surfaces along $\tilde D_{\text{red}}$. The exceptional divisor will be isomorphic to the minimal resolution of an elliptic cone of degree $5$. So the new central fiber consists of a reducible surface $S_1 \cup S_2$, where $S_1 \cong \Sigma_5$ and $S_2$ is isomorphic to the resolution of this cone. The strict transform of the blown up curve lies in the exceptional divisor, disjoint from the double curve.

Let the double curve on $S_i$ be denoted by $B_i$ as in Proposition~\ref{pl_q_mmp}.  Since $K_{S_1}+B_1$ is trivial, by taking the canonical model for the family, we can contract $S_1$. We obtain the expected elliptic cone of degree $5$. 

\begin{acknowledgments}
I am extremely grateful to my advisor, Maksym Fedorchuk, for introducing me to this project and for his guidance and patience throughout. I would also like to thank Brian Lehmann for very helpful discussions regarding the minimal model program. I would also like to express my gratitude to Changho Han for insightful conversations about the theory of stable pairs. 
\end{acknowledgments}

\thispagestyle{empty}
{\small
\markboth{References}{References}
\bibliographystyle{amsalpha} 
\bibliography{Refs}{}

}

	\bigskip

\end{document}